\date{\today}
\theoremstyle{plain}% default
\newtheorem{thm}{Theorem}[section]
\newtheorem{lem}[thm]{Lemma}
\newtheorem{prop}[thm]{Proposition}
\newtheorem{cor}[thm]{Corollary}
\newtheorem*{Thm}{Theorem}
\newtheorem{conj}[thm]{Conjecture}
\theoremstyle{definition}
\newtheorem{rem}[thm]{Remark}
\newcommand{\ts}[1]{\normalfont{\textsf{#1}}}
\newcommand{\K}{\Bbbk}
\newcommand{\ol}[1]{\overline{#1}}
\renewcommand{\a}{\alpha}
\renewcommand{\b}{\beta}
\renewcommand{\c}{\gamma}
\newcommand{\e}{\varepsilon}
\renewcommand{\l}{\lambda}
\renewcommand{\d}{ \delta}
\newcommand{\f}{\varphi}
\newcommand{\td}{\tilde{\d}}
\renewcommand{\r}{\mathfrak{r}}
\renewcommand{\t}[1]{\widetilde{#1}}
\newcommand{\cal}[1]{\mathcal{#1}}
\newcommand{\R}{\mathcal{R}}
\newcommand{\C}{\mathcal{C}}
\newcommand{\mor}[3]{$#1\colon #2 \to #3$}
\renewcommand{\ker}[1]{\ts{Ker}\,  #1}
\newcommand{\im}[1]{\ts{Im}\,  #1}
\newcommand{\Hom}[3]{\ts{Hom}_{#1}\!(#2,#3)}
\newcommand{\Ext}[4]{\ts{Ext}_{#1}^{#2}\!(#3,#4)}
\newcommand{\End}[2]{\ts{End}_{#1}\!(#2)}
\newcommand{\Ho}[3]{\ts{HH}^{#1}(#2,#3)}
\newcommand{\HH}[2]{\ts{HH}^{#1}(#2)}
 \title[$\ts{HH}^1$  for cluster-tilted algebras  ]{The first Hochschild cohomology  group of  a cluster-tilted algebra revisited}
\author[I. Assem]{Ibrahim Assem}
\address{D\'epartement de Math\'ematiques, Universit\'e
  de Sherbrooke, Sherbrooke, Qu\'ebec, Canada J1K~2R1}
\email{Ibrahim.Assem@usherbrooke.ca}
\author[J.C. Bustamante]{Juan Carlos Bustamante}
\address{D\'epartement de Math\'ematiques, Universit\'e
  de Sherbrooke, Sherbrooke, Qu\'ebec, Canada J1K~2R1}
\email{juan.carlos.bustamante@usherbrooke.ca}
\author[K. Igusa]{Kiyoshi Igusa}
\address{Department of Mathematics, Brandeis University, Waltham, MA 02454}
\email{igusa@brandeis.edu}
\author[R. Schiffler]{Ralf Schiffler}
\address{Department of Mathematics, University of Connecticut, Storrs, CT 06269}
\email{schiffler@math.uconn.edu}
 \subjclass{Primary 16E40}
\keywords{Hochschild cohomology; cluster-tilted algebras}
\begin{document}
\maketitle 

\begin{center}
 Dedicated to Christophe Reutenauer for his $60^{\rm th}$ birthday.
\end{center}

\begin{abstract}
	Given a cluster-tilted algebra $B$ we study its first Hochschild cohomology group $\HH{1}{B}$ with coefficients in the $B\textrm{-}B$-bimodule $B$. If $C$ is a tilted algebra such that $B$ is the relation extension of $C$ by $E=\Ext{C}{2}{DC}{C}$, then we prove that $\HH{1}{B}$ is isomorphic, as a vector space, to the direct sum of $\HH{1}{C}$ with $\Ho{1}{B}{E}$ . This yields homological interpretations for results of the first and the fourths author with M.J. Redondo. 
\end{abstract}

\section*{Introduction}\label{section:intro}
This paper is the third of a series devoted to studying the first Hochschild cohomology group of a cluster-tilted algebra \cite{AR09, ARS12}.

Cluster-tilted algebras appeared naturally during the study of the cluster algebras of Fomin and Zelevinsky \cite{FZ02}. They were introduced in \cite{BMR06} and independently in \cite{CSS06} for the type $\mathbb{A}$ and, since then, have been the subject of several investigations. In particular, it was proved in \cite{ABS08} that if $C$ is a tilted algebra, then the trivial extension of $C$ by the $C\textrm{-}C$-bimodule $E=\Ext{C}{2}{DC}{C}$, called the \emph{relation-extension} of $C$, is cluster-tilted and, conversely, every cluster-tilted algebra arises in this way.

The Hochschild cohomology groups of an algebra were defined by Hochschild in 1945, see \cite{H45}. These are subtle homological invariants, not only of the algebra, but also of its derived category \cite{Hap89, K04}. In \cite{Skow92}, the vanishing of the first Hochschild cohomology group was related to the simple connectedness of the algebra. Further connections between the first Hochschild cohomology group and the fundamental groups of an algebra were obtained in \cite{AdlP96,PS01}. It was then a natural question to try to relate the first Hochschild cohomology group $\HH{1}{C}$ of a tilted algebra $C$ with coefficients in the $C\textrm{-}C$-bimodule $C$ to the corresponding group $\HH{1}{B}$ of the relation-extension $B$. For this purpose, a first observation is that, because $B$ is a trivial extension of $C$, then there exists a canonical morphism \mor{\f}{\HH{1}{B}}{\HH{1}{C}}, see \cite{AR09}. Next, an equivalence relation was defined in \cite{AR09} between the arrows in the quiver of $B$ which are not in the quiver of $C$. The number of equivalence classes is then denoted by $n_{B,C}$. It was shown in \cite{AR09} that if $C$ is a tilted algebra over an algebraically closed field $\K$ such that the relation-extension $B$ is schurian, then there exists a short exact sequence of vector spaces
$$\SelectTips{eu}{10}\xymatrix{ 0\ar[r] & \K^{n_{B,C}} \ar[r] &\HH{1}{B} \ar[r]^\varphi & \HH{1}{C}\ar[r] & 0 . }$$
This result was generalised in \cite{ARS12} to the cases where $C$ is constricted (in the sense of \cite{BM01}) or $B$ is tame. The proofs of these two results were combinatorial. In the case of representation-finite cluster tilted-algebras, the Hochschild cohomology has also been computed by Ladkani using different methods  see \cite{L12-arXiv}.

Our objective in this note is to provide a homological interpretation of this short exact sequence, removing all assumptions on $B$ or $C$. Our main theorem may be stated as follows.

\begin{Thm} Let $\K$ be an algebraically closed field , $C$ be a tilted $\K$-algebra and $B$ be the trivial extension of $C$ by $E=\Ext{C}{2}{DC}{C}$. Then there exists a short exact sequence of vector spaces
$$\SelectTips{eu}{10}\xymatrix{ 0\ar[r] & \Ho{1}{B}{E} \ar[r] &\HH{1}{B} \ar[r]^\f & \HH{1}{C}\ar[r] & 0 . }$$
\end{Thm}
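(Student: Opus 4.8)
The plan is to use the fact that $E$ is a square-zero two-sided ideal of $B$ with $B/E\cong C$, so there is a short exact sequence of $B\textrm{-}B$-bimodules $0\to E\to B\xrightarrow{\ \pi\ } C\to 0$, where $C$ carries the $B$-bimodule structure inflated along $\pi$ (note that $E\cdot E=0$ forces the $B$-action on $E$ to factor through $\pi$ as well). Applying $\Ho{\ast}{B}{-}=\Ext{B^e}{\ast}{B}{-}$ gives a long exact sequence
\[
\cdots\to \Ho{0}{B}{B}\to\Ho{0}{B}{C}\xrightarrow{\ \d^{0}\ }\Ho{1}{B}{E}\xrightarrow{\ j\ }\HH{1}{B}\xrightarrow{\ p\ }\Ho{1}{B}{C}\xrightarrow{\ \d^{1}\ }\Ho{2}{B}{E}\to\cdots,
\]
and the whole argument consists in forcing this five-term sequence to collapse to the one in the statement. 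Since $\pi$ admits the algebra section $\iota\colon C\to B$, I would first record that the canonical morphism factors as $\f=\iota^{\ast}\circ p$, where $\iota^{\ast}\colon\Ho{1}{B}{C}\to\HH{1}{C}$ is restriction along $\iota$ (this uses $\pi\iota=1_{C}$, so that the restricted bimodule is the regular one).

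Next I would pin down the two outer terms. In degree $0$ one has $\Ho{0}{B}{B}=Z(B)$ and $\Ho{0}{B}{C}=Z(C)$; as $C$ is a connected tilted algebra its quiver is acyclic, whence $Z(C)=\K$, and since $1\in Z(B)$ maps to $1$ the map $Z(B)\to Z(C)$ is onto, $\d^{0}=0$, and $j$ is injective. For the right-hand term I would compute, directly from the description of the derivations $B\to C$, that $\Ho{1}{B}{C}\cong\HH{1}{C}\oplus\Hom{C^e}{E}{C}$, the second summand being exactly $\ker\iota^{\ast}$. The structural lemma I would then prove is $\Hom{C^e}{E}{C}=0$: a bimodule homomorphism sends each component $e_iEe_j$ into $e_iCe_j$, and a nonzero $e_iEe_j$ in $E=\Ext{C}{2}{DC}{C}$ sits over a pair $(i,j)$ joined by a relation of $C$, i.e. by an oriented path between $i$ and $j$ in one direction, whereas $e_iCe_j\neq0$ would demand one in the other direction; acyclicity of the quiver of $C$ makes these incompatible, so $e_iCe_j=0$ and the homomorphism vanishes. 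With this, $\iota^{\ast}$ becomes an isomorphism and $\f$ is identified with $p$.

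It then remains to prove that $\f$ is surjective, equivalently that $\d^{1}=0$, equivalently that every derivation of $C$ lifts to a derivation of $B$. This is the step I expect to be the main obstacle, because the naive candidate $\iota\,\partial\,\pi$ fails to be a derivation of $B$ by an $E$-valued error term. I would resolve it by lifting $\partial$ on the $E$-part: one seeks a $\K$-linear $d\colon E\to E$ with $d(ce)=\partial(c)e+c\,d(e)$ and $d(ec)=d(e)c+e\,\partial(c)$, after which the map $(c,e)\mapsto(\partial c,\,d e)$ is a genuine derivation of $B$ mapping to $\partial$ under $\f$. The existence of such a $d$ follows from the naturality of the construction $E=\Ext{C}{2}{DC}{C}$ under infinitesimal automorphisms of $C$: base-changing along $\K\to\K[\e]/(\e^{2})$ and transporting the $C[\e]/(\e^{2})$-structure through the automorphism $1+\e\partial$ yields the required $(1+\e\partial)$-semilinear automorphism $1+\e\,d$ of $E\otimes\K[\e]/(\e^{2})$. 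Granting the two vanishing statements $\Hom{C^e}{E}{C}=0$ and $\d^{1}=0$, the long exact sequence degenerates to $0\to\Ho{1}{B}{E}\xrightarrow{\ j\ }\HH{1}{B}\xrightarrow{\ \f\ }\HH{1}{C}\to 0$, as required.
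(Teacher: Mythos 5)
Your proof is correct in outline, and its first half is essentially the paper's. The long exact sequence obtained from $0\to E\to B\to C\to 0$, the degree-zero computation $\Ho{0}{B}{C}=Z(C)=\K$ (forcing $\Ho{0}{B}{B}\to\Ho{0}{B}{C}$ to be onto and $j$ to be injective), the factorisation $\f=\iota^{\ast}\circ p$ through the section, and the vanishing of $C$-$C$-bimodule maps $E\to C$ because a new arrow in $e_iEe_j$ is antiparallel to a relation and acyclicity kills $e_iCe_j$ --- all of this is exactly Lemmas 2.1, 2.2 and Corollary 2.4 of the paper; your finer identification $\Ho{1}{B}{C}\cong\HH{1}{C}\oplus\Hom{C^e}{E}{C}$ is just a stronger packaging of the same computation. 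Where you genuinely diverge is surjectivity. The paper spends all of Section 3 on an explicit combinatorial lift: it defines $\td$ on the new arrows from the expression of $\d(r_i)$ in terms of the system of relations, shows $\td(W)=0$ up to cyclic permutation, and then verifies $\td(\t{I})\subseteq\t{I}$ by a careful analysis of which summands of $W$ pass through a given vertex. You instead integrate $\partial$ to the automorphism $1+\e\partial$ of $C\otimes_\K\K[\e]/(\e^{2})$ and use functoriality of $E=\Ext{C}{2}{DC}{C}$ to manufacture the twisted derivation $d$ of $E$. This is more conceptual, bypasses the quiver-with-potential presentation of $B$ entirely, and would in principle apply without the combinatorial control on $\t{I}$ that the paper needs; the price is that it hides real homological content in one sentence.

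That sentence is the one place you owe a proof: the claim that the induced semilinear automorphism of $E\otimes_\K\K[\e]/(\e^{2})$ has the form $1+\e d$. Its constant term is a priori only some bimodule automorphism of $E$, and $\End{C^e}{E}$ is in general large (the paper shows $\dim{\End{C^e}{E}}\geqslant n_{B,C}$), so semilinearity alone does not force the constant term to be the identity. You need two compatibilities: (i) base change, $\Ext{C_\e}{2}{DC_\e}{C_\e}\cong E\otimes_\K\K[\e]/(\e^{2})$ where $C_\e=C\otimes_\K\K[\e]/(\e^{2})$, which holds because $\K[\e]/(\e^{2})$ is finite free over $\K$ and a projective resolution of $DC$ base-changes to one of $DC_\e$; and (ii) that the comparison chain map computing the map induced by $\sigma=1+\e\partial$ on this $\ts{Ext}$ group reduces modulo $\e$ to a chain map homotopic to the identity, since $\sigma$ itself reduces to $\mathrm{id}_C$. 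Both are true and standard, but they constitute the actual mathematical work of your surjectivity argument and must be written out for the proof to stand.
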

The proof of this theorem is largely homological and  different from those in \cite{AR09, ARS12}. We also prove that $\ts{dim}_\K \Ho{1}{B}{E} \geqslant n_{B,C}$ and equality holds if and only if the indecomposable summands of the $C\textrm{-}C$-bimodule $E$ are orthogonal bricks. Combining this with the results of [6,5] we obtain, under the hypotheses therein, that, as a $C-C$-bimodule, $E$ is a direct sum of exactly $n_{B,C}$ orthogonal bricks.

Our paper is organised as follows. After a short preliminary section 1, we start the proof of our theorem in section 2 by proving the left exactness of the required sequence. It is next shown in section 3 to be right exact and  we study the kernel of the map $\f$ in section 4. We end the paper with an example in section 5.

\section{Preliminaries}\label{section:prelim}

\subsection{Quivers and relations} While we briefly recall some  concepts concerning bound quivers and algebras, we refer the reader to \cite{ASS06} or \cite{ARS95}, for instance, for unexplained notions.

Let $\K$   be a commutative field. A \emph{quiver} $Q$ is the data of two sets, $Q_0$ (the \emph{vertices}) and $Q_1$ (the \emph{arrows}) and two maps \mor{s,t}{Q_1}{Q_0} that assign to each arrow $\a$ its \emph{source} $s(\a)$ and its \emph{target} $t(\a)$. We write \mor{\a}{s(\a)}{t(\a)}. If $\b\in Q_1$ is such that $t(\a)=s(\b)$ then the composition of $\a$ and $\b$ is the \emph{path} $\a\b$. This extends naturally to paths of arbitrary positive length. The \emph{path algebra} $\K Q$ is the $\K$-algebra whose basis is the set of all paths in $Q$, including one stationary path $e_x$ at each vertex $x\in Q_0$, endowed with the  multiplication induced from the composition of paths. If $|Q_0|$ is finite, the sum of the stationary paths is the identity.

In case $\K$ is algebraically closed, then any finite-dimensional basic and connected algebra $A$  can be obtained as a quotient of a path algebra  $A\simeq \K Q/I$. In this case, the pair $(Q,I)$ is called a \emph{bound quiver}. Given two vertices $x,y\in Q_0$, a \emph{relation} from $x$ to $y$ is a $\K$ - linear combination $r= \sum_{i=1}^m \l_i w_i \in e_x I e_y$ of paths $w_i$ of length at least two from $x$ to $y$. The relation $r$ is \emph{minimal} if none of the scalars $\l_i$ is zero, and for any proper subset $J\subset \{1,\ldots,m\}$ one has $\sum_{j \in J} \l_j w_j \not\in e_x I e_y$. The relation $r$ is said to be \emph{strongly minimal} if, as before, $\l_j\not=0$ and for any proper subset $J\subset \{1,\ldots,m\}$ there is no family of non-zero scalars $\mu_j$ such that $\sum_{j\in J} \mu_j w_j \in e_x I e_y$. 

Given an algebra $A \simeq \K Q/I$, a \emph{system of relations} for an algebra $A$ is a subset $\mathcal{R}$ of $\bigcup_{x,y\in Q_0} e_x I e_y$ that generates $I$ as a two-sided ideal, but such that no proper subset of $\mathcal{R}$ does. It is shown in \cite[2.2]{ARS12} that one may assume $\R$ to be a system of strongly minimal relations.

\subsection{Cluster-tilted algebras}\label{subsec:cluster-tilted} Let $H$ be a finite dimensional hereditary $\K$-algebra, $\ts{mod}\textrm{-}H$ the category of finite dimensional right $H$-modules and  $\cal{D}^b(\ts{mod}\textrm{-}H)$ the corresponding bounded derived category. It is a triangulated category with shift functor denoted by $[1]$, and it has an Auslander-Reiten translation $\tau$. The \emph{cluster category} of $H$ is the orbit category $\cal{C}_H : = \cal{D}^b(\ts{mod}\textrm{-}H)/\tau^{-1}\circ [1]$. Again, it is a triangulated category having almost split triangles. An object $T$ in $\cal{C}_H$ is a (basic) \emph{tilting} object if $\Ext{\cal{C}_H}{1}{T}{T}=0$ and $T$ is the sum of $\ts{rk}\  \ts{K}_0(H)$ indecomposable objects which in addition are pairwise  non-isomorphic. The endomorphism algebra $\End{\cal{C}_H}{T}$ is a \emph{cluster-tilted algebra}.

\medskip 

Consider a tilting module $U$ over a hereditary algebra $H$, so that the algebra $C=\End{H}{U}$ is a  \emph{tilted algebra} \cite{HR82} and denote by $D$ the standard duality $\Hom{\K}{-}{\K}$ between  $\ts{mod}\textrm{-}H$ and $\ts{mod}\textrm{-}H^{op}$. Let $E$ be the $C\textrm{-}C$-bimodule $E=\Ext{C}{2}{DC}{C}$ with the natural actions. The trivial extension $C\ltimes E$ of $C$ by $E$, called the \emph{relation-extension} of $C$, is the algebra whose underlying vector space is $C \oplus E$, endowed with the multiplication induced by the bimodule structure of $E$, namely  $$(c_1,e_1)\cdot(c_2,e_2) = (c_1 c_2, c_1 e_2 + e_1 c_2).$$ It was shown in \cite{ABS08} that $B= C\ltimes E$ is a cluster-tilted algebra, and, conversely, every cluster-tilted algebra arises in this way, though not uniquely.

Also, the natural projection \mor{p}{B}{C} is a morphism of algebras, and so is its right inverse \mor{q}{C}{B}. We have a short exact sequence of $B\textrm{-}B$-bimodules 
\begin{equation}\label{eqn:short-BB}  0\longrightarrow E \stackrel{i}{\longrightarrow} B \stackrel{p}{\longrightarrow} C \longrightarrow 0.
\end{equation}

\medskip Also, it was shown in \cite{ABS08} that once the bound quiver $(Q,I)$ of $C$ is known then that of $B$, say $(\tilde{Q},\tilde{I})$, is obtained as follows:
\begin{itemize}
	\item $\tilde{Q}_0 = Q_0$;
	\item For $x, y \in Q_0$ , the set of arrows in $\t{Q}$ from $x$ to $y$ equals the set of arrows in $Q$ from $x$ to $y$ (which we call \emph{old arrows}) plus $| \R \cap  e_y I e_x|$ additional arrows (which we call \emph{new arrows}).
\end{itemize}
The relations defining $\t{I}$ are given by the partial cyclic derivatives of the potential $W = \sum_{r\in \R} \c_r r$, where $\c_r$ is the new arrow associated to the relation $r$ (see \cite{K11}).  Potentials are considered up to cyclic  permutations : two potentials are \emph{cyclically equivalent} if their difference lies in the linear span of all elements of the form $\a_1 \a_ 2 \cdots \a_j - \a_j \a_1 \cdots \a_{j-1}$, where    $\a_1 \a_ 2 \cdots \a_j  $ is an oriented cycle. We recall from \cite{DWZ10} that, for a given arrow $\b$ the \emph{cyclic partial derivative} $\partial_\b$ of  $W$ is defined on each cyclic summand $\b_1\cdots \b_s$ by $\partial_\b (\b_1\cdots \b_s) = \sum_{i:\b = \b_i} \b_{i+1} \cdots \b_s \b_1 \cdots \b_{i-1}$. In particular, the cyclic derivative is invariant under cyclic permutation.

\subsection{Hochschild cohomology} We recall some notions concerning Hochschild cohomology, but for unexplained ones, we refer to \cite{Hap89, Redondo01} for instance. Given a $\K$-algebra $A$, let $A^e = A \underset{\K}{\otimes}A^{op}$ be its enveloping algebra. It is well-known that the category of $A\textrm{-}A$-bimodules is equivalent to that of $A^e$-modules. If $_A X_A$ is a bimodule, then the Hochschild cohomology groups of $A$ with coefficients in $X$ are the extension groups $\Ho{i}{A}{X} = \Ext{A^e}{i}{A}{X}$. In case $X=A$, we simply write $\HH{i}{A}$.

We are  interested in computing the Hochschild cohomology groups of cluster-tilted algebras, which are given by quivers and relations. In this context we can use a convenient resolution for computing the $\ts{Ext}$ groups, see \cite[1.1 and 1.2]{C89}. Let $A=\K Q /I$ and  $\r$ be its Jacobson radical. Then $A_0=  A / \r$ is the semisimple algebra generated by the vertices of $Q$, and as $A_0$-bimodules one has $A = A_0 \oplus \r.$ The following result will be used in the sequel.

\setcounter{thm}{\value{subsection}}

\begin{prop}[1.2 in \cite{C89}]\label{prop:res-rad} Given an $A^e$-module $X$, the Hochschild cohomology groups $\Ho{i}{A}{X}$ are the cohomology groups of the complex
$$\SelectTips{eu}{10}\xymatrix{0\ar[r] & X^{A_0} \ar[r]^-{d^1} & \Hom{A_0^e}{\r}{X} \ar[r]^{d^2} & \Hom{A_0^e}{\r \otimes \r }{X}\ar[r]^{d^3}  &\Hom{A_0^e}{\r^{\otimes^3 }}{X} \ar[r] &\cdots }$$
 where the tensor products are taken over $A_0$, $X^{A_0} = \{x\in X| sx = xs,\  \mbox{ for all } s \in A_0\} = \underset{s\in Q_0}{\bigoplus} e_s X e_s $, the differentials are given by $(d^1 x)(r) = xr-rx$, and, in general for $i\geqslant 2$
\begin{eqnarray*}
 d^i f (r_1\otimes \cdots \otimes r_i) & = & r_1 f(r_2\otimes \cdots \otimes r_i) + \sum_{j=1}^{i-1}(-1)^j f(r_1\otimes \cdots \otimes r_j r_{j+1} \otimes \cdots r_i)  \\
							& + & (-1)^i f(r_1\otimes \cdots \otimes r_{i-1})r_i.
\end{eqnarray*}
\end{prop}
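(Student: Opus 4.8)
The plan is to exhibit the stated complex as $\Hom{A^e}{P_\bullet}{X}$ for the $A_0$-relative (normalised) bar resolution $P_\bullet \to A$ of $A$ as an $A^e$-module. The two ingredients that make everything work are already recorded above: as $A_0$-bimodules $A = A_0 \oplus \r$, so that $\r$ plays the role of $A/A_0$; and, since $\K$ is algebraically closed and $A_0 = \bigoplus_{x\in Q_0}\K e_x$ is a product of copies of $\K$, the algebra $A_0$ is separable, i.e. $A_0^e = A_0 \otimes_\K A_0^{op}$ is semisimple.

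First I would write down the resolution
$$\cdots \longrightarrow A \otimes_{A_0} \r \otimes_{A_0} \r \otimes_{A_0} A \longrightarrow A \otimes_{A_0} \r \otimes_{A_0} A \longrightarrow A \otimes_{A_0} A \stackrel{\mu}{\longrightarrow} A \longrightarrow 0,$$
where $\mu$ is the multiplication and the higher maps are the usual bar differential $b$, contracting adjacent factors with alternating signs (note that $r_ir_{i+1}\in\r$ automatically, so no projection is needed on the interior factors). Each term $P_n = A \otimes_{A_0} \r^{\otimes_{A_0} n} \otimes_{A_0} A$ is isomorphic to $A^e \otimes_{A_0^e}\r^{\otimes_{A_0} n}$, and is therefore projective over $A^e$: the functor $A^e \otimes_{A_0^e}(-)$ is left adjoint to the exact restriction functor, hence sends projectives to projectives, and every $A_0^e$-module is projective because $A_0^e$ is semisimple. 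I expect the main obstacle to be the exactness of this complex. The point is that it admits a contracting homotopy as a complex of $A_0^e$-modules, given by the extra degeneracy $a_0 \otimes \cdots \otimes a_{n+1} \mapsto 1 \otimes a_0 \otimes \cdots \otimes a_{n+1}$ followed by the projection $A = A_0 \oplus \r \twoheadrightarrow \r$ on the newly interior factor; it is precisely the $A_0^e$-linear splitting $A = A_0 \oplus \r$, i.e. the separability of $A_0$, that guarantees this homotopy is well defined and satisfies $bs + sb = \mathrm{id}$. Thus $P_\bullet \to A$ is a genuine projective resolution.

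Finally I would apply $\Hom{A^e}{-}{X}$ and invoke the adjunction
$$\Hom{A^e}{A^e \otimes_{A_0^e} M}{X} \;\cong\; \Hom{A_0^e}{M}{X}$$
for any $A_0^e$-module $M$, taking $M = \r^{\otimes_{A_0} n}$. This identifies the $n$-th cochain group with $\Hom{A_0^e}{\r^{\otimes n}}{X}$ and, in degree zero, with $\Hom{A_0^e}{A_0}{X} = X^{A_0} = \bigoplus_{s \in Q_0} e_s X e_s$. Transcribing the bar differential $b$ through this adjunction yields exactly the formulas in the statement: a cochain $f$ corresponds to the $A^e$-map $a_0 \otimes r_1 \otimes \cdots \otimes a_{n+1} \mapsto a_0\, f(r_1 \otimes \cdots \otimes r_n)\, a_{n+1}$, and the three groups of terms of $d^i$ arise respectively from the outer-left factor ($r_1 f(\cdots)$), the interior contractions ($r_jr_{j+1}$), and the outer-right factor ($f(\cdots)r_i$) of $b$. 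The cohomology of the resulting complex is $\Ext{A^e}{i}{A}{X} = \Ho{i}{A}{X}$, as required.
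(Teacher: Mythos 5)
Your argument is correct, and it is precisely the standard one: the paper itself gives no proof of this proposition (it is quoted from \cite{C89} with only a citation), and the source establishes it exactly as you do, via the $A_0$-relative reduced bar resolution $A\otimes_{A_0}\r^{\otimes_{A_0} n}\otimes_{A_0}A$, whose terms are $A^e$-projective by separability of $A_0$ and whose exactness follows from the extra-degeneracy homotopy. Applying $\Hom{A^e}{-}{X}$ and the induction adjunction then yields the stated complex, so there is nothing to add.
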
\qed

From this, one sees that $\Ho{0}{A}{X} = \{ x\in X|\ ax = xa, \mbox{ for all } a\in A\}$, so in particular $\HH{0}{A}$ is the centre of $A$. The kernel of the map $d^2$ is the set of $A_0$-bilinear maps \mor{f}{\r}{X} such that $f(a_1 a_2) = a_1 f(a_2) + f(a_1) a_2$ for $a_1,a_2\in \r$, that is, the \emph{derivations} of $\r$ in $X$. If we extend such a derivation $f$ to $A_0$ by letting $f(A_0)=0$, we obtain the derivations of $A$ in $X$ (see \cite{C98,Redondo01}). Also, for a fixed $x\in X $ the map $  d^1 x = [x,-] \colon a\mapsto ax -xa $ is a derivation, and  $\ts{Im}\ d^1$ is the set of \emph{inner} derivations.

A useful feature of the complex above is that we only need to deal with maps that are $A_0$-bilinear. Thus, if $r\in e_i \r e_j$, then  $f(r) = f(e_i r e_j) = e_i f(r) e_j \in e_i X e_j.$

\begin{rem}\label{rem:alt-approach}
Alternatively, derivations can be described as follows. Let $\K Q_1$ be the $A_0$-bimodule generated by the set of arrows of $Q$. A $\K Q_0$-bilinear map \mor{\d}{\K Q_1}{\K Q} can be extended to  a $\K Q_0$-bilinear map $\K Q\to \K Q$ using the Leibniz rule, so it becomes a derivation of $\K Q$. Then, the map induces a  unique derivation of $A$ if and only if $\d(I) \subseteq I$.   
\end{rem}

\section{A left exact sequence of cohomology groups}\label{section:left-exact}

Let $C$ be a tilted algebra, $E=\Ext{C}{2}{DC}{C}$  and $B=C\ltimes E$ the corresponding cluster-tilted algebra. Upon applying the functor $\Hom{B^e}{B}{-}$ to the short exact sequence (\ref{eqn:short-BB}) of section \ref{subsec:cluster-tilted} we obtain a long exact sequence of cohomology groups
$$\SelectTips{eu}{10}\xymatrix@C=7pt{ 0\ar[r] & \Hom{B^e}{B}{E}\ar[r]&\Hom{B^e}{B}{B}\ar[r]&\Hom{B^e}{B}{C}\ar[r]&\Ext{B^e}{1}{B}{E}\ar[r]^{\iota} &\Ext{B^e}{1}{B}{B}\ar[r]^{\overline{p}} &\Ext{B^e}{1}{B}{C}\ar[r] &\cdots }$$
Our first task is to compare the cohomology groups of $C$, that is $\HH{i}{C}$, to those of $B$ with coefficients in $C$, that is $\Ho{i}{B}{C} = \Ext{B^e}{i}{B}{C}$.  The following lemma will be useful in the sequel.

\begin{lem}\label{lem:hom-E}
\begin{enumerate}[$a)$]\ 
	\item There is an isomorphism $\HH{0}{C} \simeq \Ho{0}{B}{C}$,
	\item For every $n\geqslant 1$, there is a  monomorphism $\HH{n}{C} \hookrightarrow \Ho{n}{B}{C}$.
\end{enumerate}
\end{lem}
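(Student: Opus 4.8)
The strategy is to compute both groups with the complex of Proposition~\ref{prop:res-rad} and to turn the comparison into a split injection by means of the algebra section $q$. Since $\t{Q}_0 = Q_0$, the algebras $B$ and $C$ have the \emph{same} semisimple quotient $S := \K Q_0 = B/\r_B = C/\r_C$, and both $p$ and $q$ fix the vertices; hence they restrict to maps of $S$-bimodules $p\colon \r_B \to \r_C$ and $q\colon \r_C \to \r_B$ satisfying $p\,q = \mathrm{id}_{\r_C}$. Viewing $C$ as a $B$-bimodule through $p$, as in the sequence (\ref{eqn:short-BB}), Proposition~\ref{prop:res-rad} identifies $\HH{n}{C}$ and $\Ho{n}{B}{C}$ with the cohomology of the complexes $\Hom{S^e}{\r_C^{\otimes n}}{C}$ and $\Hom{S^e}{\r_B^{\otimes n}}{C}$, the tensor products being taken over $S$.

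I would then introduce the two obvious maps between these complexes: precomposition with $p^{\otimes n}$ yields $p^{*}\colon \Hom{S^e}{\r_C^{\otimes n}}{C} \to \Hom{S^e}{\r_B^{\otimes n}}{C}$, $f \mapsto f \circ p^{\otimes n}$, and precomposition with $q^{\otimes n}$ yields a map $q^{*}$ in the opposite direction. The key claim is that both are morphisms of complexes. This is a direct verification from the formula for $d^{i}$: one uses that $p$ and $q$ are algebra homomorphisms, so that $p(r_j r_{j+1}) = p(r_j)\,p(r_{j+1})$ and similarly for $q$, together with the fact that the $B$-actions on $C$ are $b\cdot c = p(b)\,c$ and $c\cdot b = c\,p(b)$; in particular $q(s)\cdot c = p(q(s))\,c = s\,c$, which is exactly what makes the outer terms match. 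Since $p\,q = \mathrm{id}$ on $\r_C$, we obtain $q^{*}\circ p^{*} = \mathrm{id}$ on the $C$-complex, so $p^{*}$ is a split monomorphism of complexes with retraction $q^{*}$.

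Passing to cohomology, $p^{*}$ therefore induces for every $n \geqslant 0$ a (split) monomorphism $\HH{n}{C} \hookrightarrow \Ho{n}{B}{C}$, with retraction induced by $q^{*}$; this already gives part $b)$. For part $a)$ it remains to check surjectivity in degree $0$, where $p^{*}$ is the identity of $C^{S}$. Since $p$ is surjective, $\Ho{0}{B}{C} = \{ c \in C : p(b)\,c = c\,p(b)\ \text{for all } b \in B \} = \{ c\in C : c'c = cc'\ \text{for all } c'\in C\}$, which is the centre of $C$, that is $\HH{0}{C}$; hence the inclusion is an equality and $a)$ follows.

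The only real work is the verification that $p^{*}$ and $q^{*}$ commute with the differentials, and in particular that $q^{*}$ is a chain map: this is where one must use \emph{simultaneously} that $q$ is an algebra homomorphism and that the bimodule structure on $C$ is the one induced by $p$, the relation $p\,q = \mathrm{id}$ being precisely what reconciles the $B$-action $q(s)\cdot c$ with the $C$-action $s\,c$. Once the two complexes are set over the common ground ring $S$, everything else is formal, and in fact the argument shows more than claimed, namely that $p^{*}$ is split injective in every degree.
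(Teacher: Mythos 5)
Your proposal is correct and follows essentially the same route as the paper: both compute $\HH{n}{C}$ and $\Ho{n}{B}{C}$ via the reduced complex of Proposition~\ref{prop:res-rad} over the common semisimple quotient $B_0=C_0$, and exhibit $\left(p^{\otimes\bullet}\right)^{\ast}$ as a split monomorphism of complexes with retraction $\left(q^{\otimes\bullet}\right)^{\ast}$, which gives $b)$, while degree $0$ is handled by identifying both kernels with the centre of $C$. The only (cosmetic) difference is that you verify the degree-$0$ isomorphism by computing $\Ho{0}{B}{C}$ directly as the centre, whereas the paper deduces it from the injectivity of $p^{\ast}$ and the equality $d^1_B = p^{\ast}d^1_C$.
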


\begin{proof} We use proposition \ref{prop:res-rad}. Let $\r_C$ and $\r_B$ be the radicals of $C$ and $B$, respectively. Because $B=C\ltimes E$ is a trivial extension, we have $\r_B =\r_C \oplus E$. In particular, the projection \mor{p}{B}{C} restricts to a natural retraction $\r_B \to\r_C$ which we still denote by $p$. Further, we let \mor{q}{\r_C}{\r_B} be its right inverse (section), and   $-^* = \Hom{C_0^e}{-}{C}$.

Note that we have $B_0 = C_0$. The groups $\Ext{C^e}{i}{C}{C}$ are the cohomology groups of the upper complex in the diagram below, whereas $\Ext{B^e}{i}{B}{C}$ are those of the  lower one. 
$$\SelectTips{eu}{10}\xymatrix{
0\ar[r] & C^{C_0} \ar[r]^-{d^1_C}\ar@{=}[d]& \Hom{C_0^e}{\r_C}{C}\ar[d]^{p^*}\ar[r]^-{d^2_C} & \Hom{C_0^e}{\r_C \otimes \r_C}{C}\ar[r] \ar[d]^{(p\otimes p)^*} &\cdots\\
0\ar[r] & C^{C_0} \ar[r]^-{d^1_B}& \Hom{C_0^e}{\r_B}{C}\ar[r]^-{d^2_B}&\Hom{C_0^e}{\r_B \otimes \r_B}{C}\ar[r]& \cdots
}$$

A direct computation shows that $\left(p^{\otimes \bullet}\right)^\ast$ defines a morphism of complexes which is in fact a section admitting $\left(q^{\otimes \bullet}\right)^\ast$ as retraction. This shows statement $b)$. For the isomorphism of statement $a)$ use the fact that  $p^\ast$ is injective and we have
$$\HH{0}{C} = \Ext{C^e}{0}{C}{C} = \ker{d^1_C} = \ker{p^* d^1_C} = \ker{d^1_B} = \Ext{B^e}{0}{B}{C} = \Ho{0}{B}{C}.$$  and we are done.

\end{proof}

Let us denote by $\ts{H}^n(p)$ and by $\ts{H}^n(q)$ the maps induced in cohomology by  $\left(p^{\otimes \bullet}\right)^\ast$ and  $\left(q^{\otimes \bullet}\right)^\ast$, respectively.  The next step is to extract a left exact  sequence from the long exact cohomology sequence involving only the degree one terms. For this sake, we define $\f$ to be the composition
$$\SelectTips{eu}{10}\xymatrix{\HH{1}{B} = \Ext{B^e}{1}{B}{B} \ar[r]^(.6){\overline{p}} & \Ext{B^e}{1}{B}{C} \ar[r]^(.40){\ts{H}^1(q)}& \Ext{C^e}{1}{C}{C} = \HH{1}{C} }.$$

\begin{rem} Note that \mor{\f}{\HH{1}{B}}{\HH{1}{C}} is the map that sends the class of a map $\d$ from $B$ to $B$ to that of  the map $p\d q$ from $C$ to $C$. A straightforward computation, as done in \cite{AR09}, shows that if $\d$ is a derivation (or an inner derivation), then so is $p\d q$. Thus, our map $\f$ is exactly the map $\HH{1}{B} \to \HH{1}{C}$ considered in \cite{AR09, ARS12}. 
 
\end{rem}

In order to obtain the desired 3-term sequence we need the following:

\begin{lem}\label{lem:new} $\ker{\f} = \ker{\ol{p}}$.
	
\end{lem}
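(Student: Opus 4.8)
The inclusion $\ker{\ol{p}}\subseteq\ker{\f}$ is immediate: by definition $\f=\ts{H}^1(q)\circ\ol{p}$, so anything annihilated by $\ol{p}$ is annihilated by $\f$. The whole content is therefore the reverse inclusion $\ker{\f}\subseteq\ker{\ol{p}}$, and the plan is to obtain it by showing that $\ts{H}^1(q)$ is injective on $\im{\ol{p}}$; indeed, if $x\in\ker{\f}$ then $\ts{H}^1(q)(\ol{p}(x))=\f(x)=0$, and injectivity on $\im{\ol{p}}$ forces $\ol{p}(x)=0$. Concretely I would prove the stronger identity $\ol{p}=\ts{H}^1(p)\circ\f$ of maps $\HH{1}{B}\to\Ho{1}{B}{C}$: granting it, $\ker{\f}\subseteq\ker{(\ts{H}^1(p)\circ\f)}=\ker{\ol{p}}$, since $\ts{H}^1(p)$ is a monomorphism by Lemma \ref{lem:hom-E}(b).

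I would check $\ol{p}=\ts{H}^1(p)\circ\f$ at the level of the complex of Proposition \ref{prop:res-rad}, using the section $\left(p^{\otimes\bullet}\right)^\ast$ and retraction $\left(q^{\otimes\bullet}\right)^\ast$ of Lemma \ref{lem:hom-E}. Represent a class of $\HH{1}{B}$ by a derivation $\d\colon\r_B\to B$. Then $\ol{p}(x)$ is represented by $p\d\colon\r_B\to C$, whereas $\ts{H}^1(p)\f(x)$ is represented by $p\d qp$, where $qp\colon\r_B\to\r_B$ is the idempotent projecting $\r_B=\r_C\oplus E$ onto $\r_C$ along $E$. Their difference $p\d\,(1-qp)$ is the cochain supported on $E$ and vanishing on $\r_C$. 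Now every degree-one coboundary of $B$ with coefficients in $C$ has the form $r\mapsto[c,p(r)]$, and hence vanishes on $E=\ker{(p\colon\r_B\to\r_C)}$; consequently $p\d\,(1-qp)$ is a coboundary if and only if it is identically zero, that is, if and only if $p\,\d|_E=0$.

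Thus the lemma reduces to the assertion that every derivation of $B$ maps $E$ into $E$, equivalently $p\,\d(\c_r)=0$ for each new arrow $\c_r$. Here I would exploit the grading of $B=C\ltimes E$ placing $C$ in degree $0$ and $E$ in degree $1$, so that $B=B_0\oplus B_1$ because $E^2=0$. Since the relations generating $\t{I}$ are the cyclic derivatives of the degree-one potential $W=\sum_{r\in\R}\c_r r$, the ideal $\t{I}$ is homogeneous, so $\d$ decomposes into homogeneous derivations $\d=\d^{(1)}+\d^{(0)}+\d^{(-1)}$, and $p\,\d|_E$ is exactly the degree-$(-1)$ component $\d^{(-1)}$. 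A degree-$(-1)$ derivation kills all old arrows and sends each new arrow $\c_r$, with $\c_r\colon y\to x$, into $e_yCe_x$; the task is to show it must vanish.

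The main obstacle is precisely the vanishing of $\d^{(-1)}$. I would attack it through the requirement $\d^{(-1)}(\t{I})\subseteq\t{I}$ evaluated on the relations $\partial_\a W$ for old arrows $\a$, together with the strong minimality of $\R$; the key structural fact I expect to need is that in a tilted algebra a minimal relation $r\in e_xIe_y$ admits no nonzero parallel return path, so that $e_yCe_x=0$ and $\d^{(-1)}(\c_r)$ is forced to be zero. This tilted-specific input is the only non-formal part of the argument: for an arbitrary algebra $C$ a degree-$(-1)$ derivation could survive, and then the two kernels would differ. Everything preceding it is the formal bookkeeping furnished by Proposition \ref{prop:res-rad} and Lemma \ref{lem:hom-E}.
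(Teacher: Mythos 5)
Your proof is correct, and it reaches the reverse inclusion by a genuinely different route than the paper, although both hinge on the same decisive fact. The paper argues pointwise on a class in $\ker{\f}$: it subtracts an inner derivation so that $p\d q=0$, observes that a derivation $B\to C$ vanishing on $C$ restricts to a morphism of $C\textrm{-}C$-bimodules $E\to C$, and kills it on the generating new arrows because $e_iCe_j=0$. You instead prove the stronger identity $\ol{p}=\ts{H}^1(p)\circ\f$ and conclude via the injectivity of $\ts{H}^1(p)$ from Lemma \ref{lem:hom-E}; your cochain-level bookkeeping (coboundaries of $\Ho{1}{B}{C}$ vanish on $E$, so $p\d(1-qp)$ is a coboundary only if it is zero) is sound, and the identity buys you the cleaner statement that \emph{every} derivation of $B$ maps $E$ into $E$, with no adjustment by inner derivations. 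The one place you over-engineer is the final step: once you know that a new arrow $\c_r\colon y\to x$ is antiparallel to a relation in $e_xIe_y$ and that $C$ is triangular, the vanishing of $\td^{(-1)}(\c_r)$ is immediate from $C_0$-bilinearity, since $\td^{(-1)}(\c_r)=e_y\,\td^{(-1)}(\c_r)\,e_x\in e_yCe_x=0$; neither the strong minimality of $\R$ nor the condition $\td^{(-1)}(\t{I})\subseteq\t{I}$ is needed, and the ``tilted-specific input'' you flag is really just the acyclicity of the quiver of $C$ (which the paper invokes as ``because $C$ is triangular''). The grading argument itself is fine --- $\t{I}$ is homogeneous for the degree placing $E$ in degree $1$, so the degree $-1$ component of a derivation is again a derivation, vanishes on $C$, and is determined by its values on the new arrows --- so your proof closes once this last observation is made explicit.
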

\begin{proof}
	Clearly $\f = \ts{H}^1(q)\ol{p}$ implies immediately that $\ker{\ol{p}}\subseteq \ker{\f}$. Let thus $\d$ be a derivation whose class belongs to $\ker{\f}$. Thus $p\d q$ is an inner derivation of $C$, that is there exists $c\in C$ such that $p\d q =[c,-]$. Write  $\d_c = [c,-]$. Replacing $\d$ by $\d-\d_c$ we can assume that $p\d q=0$, that is $f=p \d$ equals zero when restricted to $C$. Now $f$ being a derivation on $B$ which is zero on $C$, is a morphism of $C\textrm{-}C$-bimodules \mor{f}{E}{C}.  Indeed, let $e \in E$ and $c \in C$ then $f(ec)=ef(c)+f(e)c=f(e)c$, and similarly $f(ce)=cf(e)$. Let now \mor{\c}{i}{j} be a new arrow (thus, a generator of $E$ as $C\textrm{-}C$-bimodule) then $f$ sends $\c\in e_iE e_j$ into $e_iC e_j$ which is zero, because $C$ is triangular. Therefore, $f=0$ on $E$, so $\ker{\f} = \ker{\ol{p}}$.
\end{proof}

\begin{cor}\label{cor:exact-left}
	There is an exact  sequence 
$$\SelectTips{eu}{10}\xymatrix{
0\ar[r] & \Ho{1}{B}{E} \ar[r]^\iota &\HH{1}{B} \ar[r]^\f & \HH{1}{C}
}$$
with $\f = \ts{H}^1(q)\, \ol{p}$.

\end{cor}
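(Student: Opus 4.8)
The plan is to read the corollary directly off the long exact cohomology sequence displayed at the beginning of this section, using Lemma~\ref{lem:new} to trade the map $\ol{p}$ for $\f$. The relevant stretch of that sequence is
$$\Ho{0}{B}{B} \xrightarrow{\,p_*\,} \Ho{0}{B}{C} \xrightarrow{\,\delta\,} \Ho{1}{B}{E} \xrightarrow{\,\iota\,} \HH{1}{B} \xrightarrow{\,\ol{p}\,} \Ho{1}{B}{C},$$
where $p_*$ is induced by $p$ and $\delta$ is the connecting homomorphism. Thus there are exactly two things to verify: exactness at $\HH{1}{B}$, and injectivity of $\iota$ (that is, exactness at $\Ho{1}{B}{E}$, which accounts for the leading $0$ in the statement).

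Exactness at $\HH{1}{B}$ should be essentially formal. The long exact sequence already gives $\im{\iota}=\ker{\ol{p}}$, while Lemma~\ref{lem:new} gives $\ker{\f}=\ker{\ol{p}}$; combining the two yields $\im{\iota}=\ker{\f}$, which is precisely exactness of $\Ho{1}{B}{E}\xrightarrow{\iota}\HH{1}{B}\xrightarrow{\f}\HH{1}{C}$ at the middle term. Since $\f=\ts{H}^1(q)\,\ol{p}$ by definition, no extra bookkeeping is needed here.

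The substantive point is the injectivity of $\iota$, which by exactness is equivalent to the vanishing of $\delta$, and hence to the surjectivity of $p_*\colon \Ho{0}{B}{B}\to\Ho{0}{B}{C}$. First I would identify the two groups: $\Ho{0}{B}{B}=\HH{0}{B}$ is the centre $Z(B)$, and $\Ho{0}{B}{C}=\{c\in C : cc'=c'c \text{ for all } c'\in C\}=Z(C)$ (consistent with Lemma~\ref{lem:hom-E}(a)), under which $p_*$ becomes the restriction of $p$ to $Z(B)$. I would then use that $C$, being tilted, is triangular: its quiver is acyclic, so $e_xCe_x=\K e_x$ for every vertex $x$, forcing $Z(C)\subseteq\bigoplus_x \K e_x$, and connectedness of $C$ then forces $Z(C)=\K\cdot 1_C$. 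As $p(1_B)=1_C$, the map $p_*$ meets a generator of $Z(C)$ and is therefore surjective, so $\delta=0$ and $\iota$ is injective.

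The hard part, and the only place where the hypotheses are genuinely used, is this last step: one must know that $Z(C)$ is spanned by the identity, which rests on $C$ being a connected tilted (hence triangular) algebra; without triangularity the connecting map $\delta$ can fail to vanish. As a cross-check one can compute $\delta$ on the chain level via Proposition~\ref{prop:res-rad}: lifting $c\in Z(C)$ to $(c,0)\in B$ and applying $d^1_B$ produces the $E$-valued derivation $(c',e')\mapsto ce'-e'c$, which is zero as soon as $c$ is a scalar, recovering the same conclusion.
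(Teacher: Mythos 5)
Your proposal is correct and follows essentially the same route as the paper: both arguments extract the relevant segment of the long exact sequence, prove injectivity of $\iota$ by showing the map $\Hom{B^e}{B}{B}\to\Hom{B^e}{B}{C}$ is surjective because the identity lands in the one-dimensional centre $Z(C)\simeq\K$ of the connected triangular algebra $C$, and then invoke Lemma~\ref{lem:new} to replace $\ker{\ol{p}}$ by $\ker{\f}$. Your chain-level verification of $\delta=0$ is a harmless addition not present in the paper.
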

\begin{proof}
	Because $C$ is connected and triangular, its centre is isomorphic to $\K$. Lemma \ref{lem:hom-E} then gives $\Hom{B^e}{B}{C} \simeq \K$.  In addition, the map $\Hom{B^e}{B}{p}: \Hom{B^e}{B}{B} \to  \Hom{B^e}{B}{C}  $ appearing in the long exact sequence of cohomology groups is not zero, because the identity (on $C$) belongs to its image. Thus, this map is surjective, and hence $\iota = \Ext{B^e}{1}{B}{q}$ is injective. Using the notation of Lemma \ref{lem:new}, we have an  exact sequence
$$\SelectTips{eu}{10}\xymatrix{
0\ar[r] & \Ho{1}{B}{E} \ar[r]^\iota &\HH{1}{B} \ar[r]^{\overline{p}} & \Ho{1}{B}{C}
}$$
and a map \mor{\ts{H}^1(q)}{\Ho{1}{B}{C}}{\HH{1}{C}} such that $\f = \ts{H}^1(q)\, \ol{p}$. Invoking Lemma \ref{lem:new} completes the proof.
\end{proof}

\section{The surjectivity of $\f$}\label{section:surjectivity}

Our next step is to show that $\f$ is surjective (as  was shown in \cite{AR09,ARS12} under some additional hypotheses). Thus given a derivation $\d$ of $C$, we want to extend it to a derivation $\td$ of $B$. We  proceed in two steps:
\begin{enumerate}
	\item First of all, we consider $\d$ as a $\K Q_0$-bilinear derivation from $\K Q$ to itself that sends each relation $r\in \R$ to $I$. We extend $\d$ to  a map \mor{\td}{\K \tilde{Q}}{\K \tilde{Q}} by defining it on the new arrows.  Extending it by using the Leibniz rule we obtain a derivation of $\K \tilde{Q}$. We then show that $\td$  vanishes (up to cyclic permutation) on the potential $W$;
	\item Finally we show that in fact $\td (\t{I}) \subseteq \t{I}$ so that  in fact $\td$ is a derivation of $B$.
\end{enumerate}

\medskip

We recall that two paths $u,v$ in a quiver are said to be \emph{parallel} if $s(u)=s(v)$ and $t(u)=t(v)$, and \emph{antiparallel} if  $s(u)=t(v)$ and $t(u)=s(v)$.

We need  to define $\d$ on the new arrows, which, as already observed,  are in bijection with the elements of $\R$. Let $r_i\in \R $ be a relation from $x$ to $y$ in $\K Q$. Since $\d(r_i)\in e_x I e_y$, there exist scalars $b_{ik}$,  paths $u_{ik}, v_{ik}$ and relations $r_{j_k}$ such that
\begin{equation}\label{eqn:extend-d}
\d(r_i) = \sum_{k=1}^m b_{ik} u_{ik} r_{j_k} v_{ik}.	
\end{equation}

Since $r_i$ is parallel to $u_{ik} r_{j_k} v_{ik}$, and $r_i$ is anti-parallel to the corresponding new arrow $\c_i$, then each $v_{ik} \c_i u_{ik}$ is a path anti-parallel to $r_{j_k}$. 
$$\SelectTips{eu}{10}\xymatrix@C=15pt@R=12pt{
x \ar@/_/[dr]_{u_{ik}}\ar@/^1pc/[rrrr]^{r_i} & 		&& & y\ar[llll]_{\c_i}\\
									&\ar@/^/[rr]^{r_{j_k}}	& &\ar@/^/[ll]^{\c_{j_k}}\ar@/_/[ru]_{v_{ik}} &}.$$
For a given arrow $\c_j$ we want to collect all the terms $v_{ik} r_i u_{ik}$ where $\c_j = \c_{j_k}$ with $r_{j_k}$ appearing in the expression of $\d(r_i)$ (equation  (\ref{eqn:extend-d}), above). More precisely, for a fixed new arrow $\c_j$ define
$$\mathcal{E}_j = \{(i,k)|\ r_{j_k} = r_j \mbox{ in the expression of } \d(r_i)\}$$

\begin{lem}\label{lem:delta-W}
Let \mor{\d}{\K Q}{\K Q} be a derivation such that $\d(I) \subseteq I$. Then the map \mor{\td}{\K \tilde{Q}}{\K \tilde{Q}} defined on the arrows of $\tilde{Q}$ by
$$\td(\a) = \left\{ \begin{array}{ll} \d(\a) 								& \mbox{ if } \a \mbox { is an old arrow};\\
						-\displaystyle  \sum_{(i,k)\in \mathcal{E}_j} b_{ik} v_{ik} \c_i u_{ik}	& \mbox{ if } \a = \c_j. \end{array}\right.$$
and extended by the Leibniz rule is a derivation of $\K \tilde{Q}$,  satisfying $\td(W)=0$ up to cyclic permutation.
\end{lem}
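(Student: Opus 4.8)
The plan is to compute $\td(W)$ directly from the Leibniz rule together with the explicit formula defining $\td$, and then to recognise the outcome as a sum of differences of cyclically equivalent oriented cycles, so that it vanishes in the quotient by cyclic permutations.

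First I would dispose of the claim that $\td$ is a derivation of $\K\tilde{Q}$: by Remark \ref{rem:alt-approach}, any $\K\tilde{Q}_0$-bilinear assignment on the arrows extends uniquely, via the Leibniz rule, to a derivation of $\K\tilde{Q}$, so nothing needs checking beyond well-definedness. Here the value $\td(\c_j)$ lands in $e_{t(r_j)}\K\tilde{Q}\,e_{s(r_j)}$, which is the correct space: $\c_j$ is antiparallel to $r_j$, and, by the picture preceding the statement, each path $v_{ik}\c_i u_{ik}$ occurring in $\td(\c_j)$ is antiparallel to $r_{j_k}=r_j$. Thus the only genuine content of the lemma is the vanishing of $\td(W)$ up to cyclic permutation.

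Writing $W=\sum_i \c_i r_i$ and applying Leibniz gives $\td(W)=\sum_i \td(\c_i)\,r_i + \sum_i \c_i\,\td(r_i)$. Since each $r_i$ involves only old arrows, $\td(r_i)=\d(r_i)=\sum_k b_{ik}u_{ik}r_{j_k}v_{ik}$ by (\ref{eqn:extend-d}), so the second sum equals $\sum_i\sum_k b_{ik}\,\c_i u_{ik} r_{j_k} v_{ik}$. Regrouping this double sum according to which relation $r_j$ occurs — that is, over the sets $\mathcal{E}_j$, which form a partition of the index set $\{(i,k)\}$ — rewrites it as $\sum_j \sum_{(i,k)\in\mathcal{E}_j} b_{ik}\,\c_i u_{ik} r_j v_{ik}$. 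Substituting the definition of $\td(\c_j)$ into the first sum yields $-\sum_j\sum_{(i,k)\in\mathcal{E}_j} b_{ik}\, v_{ik}\c_i u_{ik} r_j$. Combining the two,
$$\td(W)=\sum_j\sum_{(i,k)\in\mathcal{E}_j} b_{ik}\left(\c_i u_{ik} r_j v_{ik} - v_{ik}\c_i u_{ik} r_j\right).$$

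The decisive step is then to observe that each bracketed difference is cyclically zero. Expanding $r_j$ as a linear combination of paths, every term $\c_i u_{ik}\,w\,v_{ik}$, with $w$ a path appearing in $r_j$, is an honest oriented cycle, and $v_{ik}\c_i u_{ik}\,w$ is obtained from it by moving the block $v_{ik}$ from the end to the front — it is therefore a cyclic rotation of the same cyclic word. Since any such rotation is a composite of the elementary moves $\a_1\cdots\a_\ell \mapsto \a_\ell\a_1\cdots\a_{\ell-1}$ defining cyclic equivalence, each difference lies in the linear span of those relations, and hence $\td(W)=0$ up to cyclic permutation. I expect the main obstacle to be purely organisational: verifying that the regrouping over $\mathcal{E}_j$ is exactly the partition of $\{(i,k)\}$, and seeing that the sign and the placement of $v_{ik}$ in the definition of $\td(\c_j)$ have been reverse-engineered precisely so that the two families of terms pair up as cyclic rotations and cancel.
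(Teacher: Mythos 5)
Your proposal is correct and follows essentially the same route as the paper's proof: expand $\td(W)$ by the Leibniz rule, substitute (\ref{eqn:extend-d}) and the definition of $\td(\c_j)$, regroup the double sum over the partition $\{\mathcal{E}_j\}$, and observe that the paired terms are cyclic rotations of one another, hence cancel up to cyclic permutation. The only cosmetic difference is that you write the potential as $\sum_i \c_i r_i$ while the paper's computation uses the cyclically equivalent form $\sum_i r_i\c_i$; your additional remarks on well-definedness of $\td(\c_j)$ and on rotations being composites of the elementary cyclic moves are fine and merely make explicit what the paper leaves implicit.
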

\begin{proof}
 The only thing we have to prove is that $\td(W)$ is zero up to cyclic permutation, but this follows readily from:
\begin{eqnarray*}
 \td(W) 	& = & \sum_{i=1}^n \td(r_i \c_i)\\
 		& = & \sum_{i=1}^n \d(r_i) \c_i + \sum_{j=1}^n r_j \td(\c_j)\\
 		& = & \sum_{i=1}^n \sum_{k=1}^m b_{ik} u_{ik} r_{j_k} v_{ik} \c_i+ \sum_{j=1}^n r_j\sum_{(i,k)\in \mathcal{E}_j} -b_{ik} v_{ik} \c_i u_{ik} \\
 		& = & \sum_{j=1}^n r_j\sum_{(i,k)\in \mathcal{E}_j} (b_{ik} u_{ik} r_j v_{ik} \c_i -b_{ik} r_j v_{ik} \c_i u_{ik})
\end{eqnarray*}
which is zero up to cyclic permutation.% Here I corrected misprints in subscripts.
\end{proof}

Let us now show that $\td(\tilde{I}) \subseteq \t{I}$. The ideal $\tilde{I}$ is generated by the partial derivatives $\partial_\a W$ of the potential  $W$ with respect to the arrows of $\tilde{Q}$. If $\c$ is a new arrow, then $\partial_\c(W)$ is an old relation (thus an element of $I$), and then $\td(\partial_\c W) = \d(\partial_\c W) \in I \subseteq \tilde{I}$, since $\d$ is a derivation of $C$.  Thus we only need to look at the partial derivatives with respect to the old arrows, or, equivalently, we only need to look at the new relations. Before proving the desired result we need some preliminary observations.

%Assume $\a$ is an old arrow, starting at a vertex $i$. Then define: (alpha doesn't appear in the definition of W1,W2,W3 only i 
For any vertex $i$ in $Q$ define:
\medskip

$\begin{array}{rcl}
 W_1 &=& \mbox{sum of all terms in } W \mbox{ that pass through } i\\
	  &=&\displaystyle\sum_{\c : s(\c)=i} \c\ \partial_\c W;\\
 	& & \\
 W_2 &=& \mbox{sum of all terms in } W \mbox{ that do not pass through } i \mbox{ and contain an arrow}\\
 	& & \beta \mbox{ such that }\td(\beta) \mbox{ passes through } i;\\
 \\
 W_3&=&\textup{sum of all other terms in $W$}.
\end{array}$

\medskip

Since the quiver $Q$ has no oriented cycles, we can number its vertices in such a way that for every arrow $\a \in Q_1$ we have $s(\a) < t(\a)$. We fix such a numbering in the sequel.

\begin{lem}
 Let $\Theta$ be the set of all the arrows $\b$ appearing in $W_2$ such that $\td(\b)$ passes through $i$. Then $\displaystyle W_2 =\sum_{\beta \in \Theta}\beta \partial_{\beta}\!W$.
\end{lem}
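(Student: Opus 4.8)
The plan is to compare the two sides of the asserted identity term by term, working (as the paper does with potentials) up to cyclic permutation. The basic tool is that for a single cyclic term $c=\b_1\cdots\b_s$ of $W$ and any arrow $\beta$ one has $\beta\,\partial_\beta c\equiv m_\beta(c)\,c$, where $m_\beta(c)$ is the number of occurrences of $\beta$ in $c$: indeed $\partial_\beta$ rotates each occurrence of $\beta$ to the front and each such rotation is cyclically equivalent to $c$. Summing over $\beta\in\Theta$ gives $\sum_{\beta\in\Theta}\beta\,\partial_\beta W\equiv\sum_c N(c)\,c$, where $N(c)$ counts the occurrences in $c$ of arrows lying in $\Theta$, the coefficients of $W$ being carried along. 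Since $W_2$ is the subsum of $W$ over its terms avoiding $i$, the lemma reduces to showing $N(c)=1$ for every $c\in W_2$ and $N(c)=0$ for every other term. The terms of $W_3$ are disposed of immediately: by definition every arrow of $\Theta$ has $\td(\beta)$ passing through $i$, while a term of $W_3$ contains no such arrow, so $N(c)=0$ there.

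The heart of the argument is the combinatorial control coming from the fixed numbering in which old arrows increase the vertex number. Every term has the form $c=\c_r w$ with $w$ a path of old arrows from $s(r)$ to $t(r)$; as $\c_r$ is the unique descent, the vertices of $c$ form a chain $c_0<c_1<\dots<c_\ell$ with $c_0=s(r)$, $c_\ell=t(r)$, and $c$ passes through $i$ exactly when $i$ is one of the $c_m$. For an old arrow $\beta\colon a\to b$, $\K Q_0$-bilinearity of $\d$ forces $\d(\beta)$ to be a combination of paths from $a$ to $b$, so $\td(\beta)=\d(\beta)$ passes through $i$ only if $a\leqslant i\leqslant b$. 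In any term containing $\beta$ the vertices $a,b$ occur, so if $i\in\{a,b\}$ every such term passes through $i$, whence $\beta\notin W_2$ and $\beta\notin\Theta$. Thus an old arrow of $\Theta$ satisfies $a<i<b$ strictly; then $i$ lies strictly between consecutive chain vertices and so is never a vertex of a term containing $\beta$, i.e. old arrows of $\Theta$ never occur in terms passing through $i$. Since the open intervals cut out by the edges of a fixed term are pairwise disjoint, that term contains at most one old arrow of $\Theta$.

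The delicate point, which I expect to be the main obstacle, is the new arrows: here $\c_r\,\partial_{\c_r}W=\c_r r$ returns every term coming from $r$, whether or not it passes through $i$, so I must control where $\td(\c_r)$ can go. The key computation is to locate its vertices. By Lemma \ref{lem:delta-W} the paths occurring in $\td(\c_r)$ have the shape $v\,\c_{r'}\,u$, where $u,v$ are old paths with $v\colon t(r)\to t(r')$ and $u\colon s(r')\to s(r)$ arising from an occurrence of $r$ in $\d(r')$; the parallelism of $u\,r\,v$ with $r'$ gives $s(r')\leqslant s(r)\leqslant t(r)\leqslant t(r')$. Because old arrows increase, every vertex of $v$ is $\geqslant t(r)$, every vertex of $u$ is $\leqslant s(r)$, and the endpoints $t(r')\geqslant t(r)$, $s(r')\leqslant s(r)$ of $\c_{r'}$ lie outside as well. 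Hence every vertex met by $\td(\c_r)$ avoids the open interval $(s(r),t(r))$. I would record this as the crux: $\td(\c_r)$ passes through $i$ only when $i\leqslant s(r)$ or $i\geqslant t(r)$.

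This single fact closes the argument. If $\c_r\in\Theta$ then $\td(\c_r)$ passes through $i$, so $i\notin(s(r),t(r))$; and $\c_r\in\Theta$ also gives $i\neq s(r),t(r)$, since some term $\c_r w$ avoids $i$. Thus $i<c_0$ or $i>c_\ell$, whence (i) no path $w$ of $r$ can meet $i$, so $\c_r$ never occurs in a term passing through $i$; and (ii) in every term $c=\c_r w$ the vertex $i$ lies outside $(c_0,c_\ell)$, hence in no gap $(c_m,c_{m+1})$, so $c$ contains no old arrow of $\Theta$. Together with the second paragraph this yields $N(c)=0$ on $W_1$ (no arrow of $\Theta$ occurs in a term through $i$) and $N(c)=1$ on $W_2$: such a term carries exactly one arrow of $\Theta$, namely the unique bridging old arrow when $i$ falls in a gap of its chain, or else the new arrow $\c_r$ when $i$ lies outside $[c_0,c_\ell]$, but never both. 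Therefore $W_2=\sum_{\beta\in\Theta}\beta\,\partial_\beta W$ up to cyclic permutation, as claimed.
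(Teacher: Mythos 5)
Your argument is correct and rests on the same ingredients as the paper's own proof: the increasing numbering of the vertices, the fact that $\d(\b)$ is a combination of old paths from $s(\b)$ to $t(\b)$, and the fact that the paths occurring in $\td(\c_r)$ avoid the open interval $(s(r),t(r))$. The only real difference is organizational: you expand $\sum_{\b\in\Theta}\b\,\partial_\b W$ as $\sum_c N(c)\,c$ and check $N(c)$ on all of $W_1$, $W_2$ and $W_3$, which makes explicit the step the paper compresses into the assertion that every term of $W$ containing an arrow of $\Theta$ already lies in $W_2$, and your disjoint-interval observation replaces the paper's acyclicity contradiction in the case of two old arrows.
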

\begin{proof}
We show that a cycle $w$ appearing in $W_2$ contains exactly one arrow $\b$ such that $\td(\b)$ passes through the vertex $i$. Assume the contrary, that is, there are two such arrows, say $\b$ and $\beta'$. By \cite[2.1]{AR09} one of them must be an old arrow, say $\b$.  It then follows from the definition of $\td$ that $\td(\b) = \d(\b)$ is a linear combination of paths  containing only old arrows, which go from $s(\b)$ to $t(\b)$, and at least one of them passes through $i$, so that $s(\b) < i < t(\b)$. We now show that the cycle $w$ can contain at most one old arrow having this property. Assume to the contrary that, up to cyclic permutation, $w = \b u \beta' u'$ with $u, u'$ paths in  $Q$, and $\b, \beta'$ two arrows  such that $\td(\b) = b_1 b_2+ $ other terms, $\td(\beta')= b'_1 b'_2$ + other terms, with $b_1, b'_1$ paths ending at $i$.
$$\SelectTips{eu}{10}\xymatrix{
\ar@/^1pc/[rr]^\b \ar[dr]_{b_1}		&					& \ar@{~>}[dd]^u\\
						&i  \ar[ur]_{b_2}\ar[dl]_{b'_2}	&\\
\ar@{~>}[uu]^{u'}& &\ar@/^1pc/[ll]_{\beta'}\ar[ul]_{b'_1}}$$

Suppose first that $\b'$ is an old arrow. Then $b_1'$ and $b_2'$ are paths in $Q$.  Since $w= \b u \beta' u'$ is a summand of the potential, it contains exactly one new arrow. If this new arrow occurs in $u'$, then $b_2 u b'_1$ is a cycle in $\t{Q}$ consisting of old arrows, a contradiction. If the new arrow appears in $u$, a similar argument works. Thus $\b'$ is a new arrow and we have by definition,  $$\tilde\delta(\beta')= -\sum_j b_jv_j\c_ju_j,$$
where $\c_j$ are new arrows, $v_j,u_j$ are old paths and $v_j\c_ju_j $ is a path from $s(\beta') $ to $t(\beta')$. But
$$ t(\beta')\leqslant s(\beta)<i<t(\beta)\leqslant s(\beta')$$
so $v_j\c_ju_j$ cannot pass through $i$, a contradiction.

We can thus write $W_2 =\displaystyle  \sum_{\b \in  \Theta} \b w_{\b}$, where each $w_\b$ is a linear combination of paths.  By definition of $W_2$, $w_{\b}$ cannot pass through $i$, hence all the terms of $W$ which contain an arrow $\b\in \Theta$ appear exactly once in this sum. Therefore $w_{\b} = \partial_\b W$.
\end{proof}

We are now able to show that for every old arrow $\a$ we have $\td\left(\partial_\a W\right) \in \t{I}$. We do so in two steps. 

For a fixed vertex $i\in Q_0$ let $\t{I}_{\ne i}$ be the ideal of $\K \t{Q}$ generated by all the partial derivatives $\partial_\e W$ such that $\e$ does not end at $i$. 

\begin{lem}\label{lem:simp-gamma}
	If $\sum_{\c:s(\c)=i} \c w_\c$ is a linear combination of cycles which belongs to $\t{I}_{\ne i}$, then $w_\c \in \t{I}_{\ne i}$ for every $\c$ such that $s(\c) = i$.
\end{lem}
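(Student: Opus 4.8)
The plan is to exploit the unique factorisation of paths by their first arrow. Any element of $\K\t{Q}$ that is a combination of nontrivial paths all starting at the vertex $i$ can be written uniquely as $\sum_{\c : s(\c) = i} \c\, x_\c$, where each $x_\c$ is a combination of paths with source $t(\c)$; indeed, a nontrivial path starting at $i$ has a well-defined first arrow, and deleting it yields a well-defined tail. Applying this to the hypothesis, the given decomposition $\sum_{\c : s(\c)=i} \c\, w_\c$ is exactly this canonical one, so it suffices to produce \emph{some} expression of the same element in the form $\sum_{\c : s(\c)=i} \c\, z_\c$ with each $z_\c \in \t{I}_{\ne i}$, and then invoke uniqueness to conclude $w_\c = z_\c$.

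First I would observe that, since the given element is simultaneously a combination of cycles at $i$ and lies in $\t{I}_{\ne i}$, it lies in $e_i \t{I}_{\ne i} e_i$. This space is spanned by elements of the form $u\,(\partial_\e W)\, v$ with $\e$ an arrow not ending at $i$ and with $s(u) = t(v) = i$ forced by the idempotents. The key point is that each generator $\partial_\e W$ is a linear combination of paths whose common source is $t(\e)$. Because $\e$ does not end at $i$ we have $t(\e) \ne i$, so the left factor $u$, which must satisfy $s(u) = i$ and $t(u) = t(\e) \ne i$, cannot be the trivial path $e_i$. Hence $u$ begins with a genuine arrow $\c$ with $s(\c) = i$, and writing $u = \c u'$ gives $u\,(\partial_\e W)\, v = \c\,\bigl(u'\,(\partial_\e W)\, v\bigr)$ with $u'\,(\partial_\e W)\, v \in \t{I}_{\ne i}$.

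Collecting these factorisations over all spanning terms, I obtain an expression $\sum_{\c : s(\c)=i}\c\, z_\c$ with every $z_\c \in \t{I}_{\ne i}$; comparing it with the canonical decomposition $\sum_{\c}\c\, w_\c$ yields $\c(w_\c - z_\c) = 0$ for each $\c$, and since prepending $\c$ is injective on paths with source $t(\c)$, this forces $w_\c = z_\c \in \t{I}_{\ne i}$, as required. I expect the only delicate point to be the bookkeeping that guarantees the left factor is nontrivial: everything hinges on the observation that a partial derivative $\partial_\e W$ with $\e$ not ending at $i$ produces paths starting away from $i$, which is precisely what prevents a cyclic-at-$i$ generator from being realised without first traversing an arrow out of $i$. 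Once this is isolated, the remainder is the formal uniqueness of the first-arrow decomposition.
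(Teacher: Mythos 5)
Your proof is correct and follows essentially the same route as the paper: write the element as a combination $\sum_j b_j u_j\,(\partial_{\e_j} W)\,v_j$ of generators of $\t{I}_{\ne i}$, observe that each left factor $u_j$ must begin with an arrow $\c$ starting at $i$, and compare with the first-arrow decomposition to identify each $w_\c$. Your version is in fact slightly more careful than the paper's, which simply asserts that the first arrow of $u_j$ starts at $i$, whereas you supply the justification (the paths in $\partial_\e W$ have source $t(\e)\ne i$, so $u_j$ is nontrivial).
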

\begin{proof}
Write 	$\sum_{\c:s(\c)=i} \c w_\c = \sum_j b_j u_j r_j v_j$ with $b_j$ scalars, $r_j$ generators of $\t{I}_ {\ne i}$ and  $u_j$, $v_j$ paths. Note that by definition the first arrow of the $u_j$ is one of the arrows $\c$ starting at $i$, thus $u_j = \c_j u'_j$. Then
$$\sum_{\c:s(\c)=i} \c w_\c = \sum_j b_j \c_ j u'_j r_j v_j$$
and, upon comparing the terms, we obtain
$$w_\c   = \sum_{j:\c=\c_j} b_j  u'_j r_j v_j \in \t{I}_{\ne i}.$$
\end{proof}

We can now prove  the required statement.
\begin{lem} Let $\a$ be an old arrow, and $\td$ defined as before. Then $\td(\partial_\a W)\in \t{I}$.
\end{lem}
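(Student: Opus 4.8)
The engine of the proof is the single global relation already in hand, namely that $\td(W)=0$ up to cyclic permutation (Lemma \ref{lem:delta-W}). Since the cyclic partial derivative is invariant under cyclic permutation, this forces $\partial_\a(\td W)=0$ for our old arrow $\a$. First I would convert this into local information by establishing a commutation formula between $\td$ and $\partial_\a$ at the level of cyclic words. Writing $W$ as a sum of oriented cycles and expanding $\partial_\a(\td W)$, the occurrences of $\a$ split into those carried by the arrows of $W$ left untouched by $\td$ and those \emph{created} inside some $\td(x)$ when $\td$ replaces an arrow $x$ by a linear combination of paths. A direct bookkeeping shows that the first family reassembles precisely into $\td(\partial_\a W)$, so that
\[
0=\partial_\a(\td W)=\td(\partial_\a W)+R_\a ,
\]
where $R_\a$ is obtained by opening each cycle of $W$ at a newly created copy of $\a$. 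The statement $\td(\partial_\a W)\in\t{I}$ is thus equivalent to $R_\a\in\t{I}$.

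To analyse $R_\a$ I would fix $i=t(\a)$ and use the decomposition $W=W_1+W_2+W_3$ relative to $i$. Every created copy of $\a$ passes through its target $i$, hence arises from an arrow $x$ with $\td(x)$ passing through $i$; by the very definition of $W_3$ no cycle of $W_3$ contains such an arrow, so $R_\a$ receives contributions only from $W_1$ and $W_2$. The contribution of $W_2$ is the easy half: by the previous lemma a cycle of $W_2$ contains a \emph{unique} arrow $\b$ with $\td(\b)$ through $i$, and $W_2=\sum_{\b\in\Theta}\b\,\partial_\b W$. Taking a term $p=p'\a p''$ of $\td(\b)$ and opening at this $\a$, the cycles sharing a fixed $\b$ and a fixed $p$ reassemble into $p''\,\partial_\b W\,p'$; since the chosen numbering gives $s(\b)<i<t(\b)$, in particular $t(\b)\neq i$, so $\partial_\b W$ is a generator of $\t{I}_{\ne i}$ and this contribution already lies in $\t{I}_{\ne i}\subseteq\t{I}$.

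It remains to treat the contribution of $W_1$, which is the main obstacle. Here each cycle \emph{already} passes through $i$, so opening it at a created copy of $\a$ (whose target is again $i$) produces a path starting at $i$ that a priori meets $i$ twice, and the factor one strips off need not immediately be a generator of $\t{I}$. The plan is to collect these opened paths by their initial arrow $\c$ out of $i$, writing the $W_1$-contribution in the form $\sum_{\c:s(\c)=i}\c\,w_\c$, and to show that this whole sum lies in $\t{I}_{\ne i}$; Lemma \ref{lem:simp-gamma} then yields $w_\c\in\t{I}_{\ne i}$ and hence membership in $\t{I}$. The delicate point — exactly as in the proof of the previous lemma, via the orientation bound $t(\b')\le s(\b)<i<t(\b)\le s(\b')$ that forbids a cycle of the potential from meeting $i$ in two old arrows — is to show that once the genuine pass of the cycle through $i$ is removed the remaining factor avoids $i$, so that it really is a generator $\partial_\e W$ with $t(\e)\neq i$ rather than one based at $i$. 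Carrying out this reassembly for the two sources separately (for $x$ old, where $\td(x)=\d(x)$, and for $x$ a new arrow, where $\td(x)$ is the explicit sum $-\sum b\,v\c u$ of the definition) is the combinatorial heart of the argument, after which $R_\a\in\t{I}$ and therefore $\td(\partial_\a W)=-R_\a\in\t{I}$.
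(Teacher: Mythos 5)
Your opening move is correct and genuinely different from the paper's: since $\td(W)$ is cyclically equivalent to zero and the cyclic derivative is invariant under cyclic permutation, $\partial_\a(\td W)=0$ holds exactly, and your bookkeeping identity $\partial_\a(\td W)=\td(\partial_\a W)+R_\a$ is valid, so everything reduces to $R_\a\in\t{I}$. The treatment of $W_3$ and $W_2$ is essentially fine (for the $W_2$ part you only need $\partial_\b W\in\t{I}$, which holds whether $\b$ is old or new; the orientation bound $s(\b)<i<t(\b)$ is only available when $\b$ is old, but you do not actually need membership in $\t{I}_{\ne i}$ there). The genuine gap is the contribution of $W_1$, which you yourself flag as ``the combinatorial heart'' and then do not carry out. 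Your plan --- write that contribution as $\sum_{\c:s(\c)=i}\c\, w_\c$, show the whole sum lies in $\t{I}_{\ne i}$, and invoke Lemma \ref{lem:simp-gamma} --- is missing its essential input: you give no reason why that sum should lie in $\t{I}_{\ne i}$, and in your setup there is no identity from which this would follow. (In an argument that splits the identity $\td(W)=0$ by whether paths pass through $i$, the analogous membership is forced because the through-$i$ part expresses $\sum_\c\c\,\td(\partial_\c W)$ as minus a combination of generators $\partial_\c W,\ \partial_\b W$ lying in $\t{I}_{\ne i}$; your $R_\a$ comes with no such equation attached.) Moreover the appeal to Lemma \ref{lem:simp-gamma} is misdirected: the quantity you must control is the entire sum $\sum_\c\c\, w_\c$, not the cofactors $w_\c$, so if you could verify the hypothesis of that lemma you would already be done without it.

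The irony is that your commutation formula completes itself if you regroup $R_\a$ by the arrow being replaced rather than by the cycle being opened. A created copy of $\a$ sits inside a fixed term $\mu\, p'\a p''$ of $\td(y)$ for some arrow $y$ of $\t{Q}$; summing the opened paths over \emph{all} cycles of $W$ containing $y$ (each arrow occurs at most once in each such cycle, since $Q$ is acyclic) reassembles the complementary factors into $\partial_y W$, giving
$$R_\a=\sum_{y}\ \sum_{\mu\, p'\a p''\ \mbox{\scriptsize in }\td(y)}\mu\, p''\,(\partial_y W)\,p',$$
which lies in $\t{I}$ because every $\partial_y W$ does ($y$ old gives a generator of $\t{I}$, $y$ new gives an old relation in $I\subseteq\t{I}$). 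This would bypass the $W_1+W_2+W_3$ decomposition, the lemma on $\Theta$, and Lemma \ref{lem:simp-gamma} entirely. As submitted, however, the proof is incomplete: the case you identify as the main obstacle is asserted, not proved.
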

\begin{proof}
 We showed in \ref{lem:delta-W}  that up to cyclic permutation $\td(W) = 0$.  Take any $\a\in Q_1$ and let $i=s(\a)$. Then by construction we have $W = W_1 + W_2+W_3$. Thus using the Leibniz rule, we obtain (up to cyclic permutations):
 \begin{equation}\label{eqn:eq1}
  0  =\!\! \sum_{\c:s(\c)=i}\!\!\! \td(\c) \partial_\c W +\!\!\!  \sum_{\c:s(\c)=i}\!\!\! \c \td (\partial_\c W) + \sum_{\b \in \Theta} \td(\b) \partial_\b W + \sum_{\b \in \Theta} \b \td (\partial_\b W)   + \td\left(W_3\right).
 \end{equation}
By definition, the terms $\partial_\c W$ and $\partial_\b W$ appearing in the expression above  belong to $\tilde{I}_{\ne i}$. Moreover, the terms in the first two sums involve  paths passing through $i$, and the last term, as well as the fourth sum  consist of paths not passing through $i$. 
Concerning  the third sum, each $\td(\b)$ is a linear combination of paths, some of them (at least one) passing through $i$, and some (may be none) not passing through $i$. Accordingly, for each $\b$ collect the terms passing through $i$ together, and call the result $\td_i(\b)$. Collect the remaining terms to obtain $\td_0(\b)$. Thus the third sum above is
$$\sum_{\b \in \Theta} \left(\td_i(\b) + \td_0(\b) \right) \partial_\b W.$$
Altogether $\td(W)$ splits into the sum of terms passing through $i$ and the sum of terms not passing through $i$; and both sums are equal to zero. Therefore we have the following equation of cyclic words.
  \begin{equation}\label{eqn:213}
0=  \sum_{\c : s(\c)=i} \td(\c)\ \partial_\c \! W
 +\sum_{\c : s(\c)=i} \c\ \td(\partial_\c\!  W)
   +\sum_{\b\in \Theta }  \td_i(\b) \ \partial_{\b}\!  W.
 \end{equation}
Furthermore, we can view these cyclic words as paths starting at the vertex $i$. Then, the sum of these paths equals $0$.  Since each term $\partial_\c W$ and $\partial_\b W$ belongs to $\t{I}_{\ne i}$, so does $\displaystyle \sum_{\c:s(\c)=i} \c\td\left( \partial_\c W\right)$. By Lemma \ref{lem:simp-gamma} this implies that $\td(\partial_\c W)\in \t{I}$ for each $\c$ with $s(\c)=i$. So, $\td(\partial_\a W) \in \t{I}.$ 
\end{proof}

% Here it was not clear to me how the lemmas proved the theorem. So, I added this summary:
The preceding lemmata show that $\f(\tilde\delta)=\delta$. Thus $\f$ is surjective, completing the proof of the main theorem:

\begin{thm}\label{thm:main} With the notations of section \ref{section:left-exact}  there is an exact sequence 
$$\SelectTips{eu}{10}\xymatrix{
0\ar[r] & \Ho{1}{B}{E} \ar[r]^\iota &\HH{1}{B} \ar[r]^\f & \HH{1}{C}\ar[r]& 0 
}.$$
	
\end{thm}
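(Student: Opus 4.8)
The plan is to combine the left-exact sequence already produced in Corollary \ref{cor:exact-left} with a proof that $\f$ is surjective; since the injectivity of $\iota$ and the identity $\ker\f = \im\iota$ are in hand, the entire remaining burden is surjectivity. Concretely, I would take a derivation $\d$ of $C$, viewed via Remark \ref{rem:alt-approach} as a $\K Q_0$-bilinear derivation of $\K Q$ with $\d(I)\subseteq I$, and construct a derivation $\td$ of $B$ with $\f(\td)=\d$. Because $\f$ sends the class of a derivation to that of $p(-)q$, and $q$ embeds the old arrows while $p$ fixes old paths, it suffices to arrange that $\td$ restrict to $\d$ on the old arrows: then $p\td q$ and $\d$ agree on every old arrow, hence on all of $C$.

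The first step is to define $\td$ on the new arrows. Since $\t{I}$ is generated by the partial cyclic derivatives of the potential $W=\sum_r \c_r r$, the guiding principle is to choose $\td(\c_j)$ so that $\td$ fixes $W$ up to cyclic equivalence. Writing $\d(r_i)=\sum_k b_{ik}u_{ik}r_{j_k}v_{ik}$ and setting $\td(\c_j)=-\sum_{(i,k)\in\mathcal{E}_j} b_{ik}v_{ik}\c_i u_{ik}$, extended by the Leibniz rule, produces such a derivation of $\K\t{Q}$: this is Lemma \ref{lem:delta-W}, where upon substituting into $\sum_i \d(r_i)\c_i + \sum_j r_j\td(\c_j)$ the two families of monomials pair up as cyclic rotations of one another, so the sum vanishes modulo cyclic equivalence.

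The delicate step is that $\td$ descends to $B$, i.e. $\td(\t{I})\subseteq\t{I}$, and I expect this to be the main obstacle. Using that $\t{I}$ is generated by the $\partial_\a W$, I would split into two cases. For a new arrow $\c$ the derivative $\partial_\c W$ is an old relation, so $\td(\partial_\c W)=\d(\partial_\c W)\in I\subseteq\t{I}$ is immediate. The hard case is an old arrow $\a$. Here I would fix $i=s(\a)$ and organise $W=W_1+W_2+W_3$ according to whether a summand passes through $i$, avoids $i$ but contains an arrow $\b$ with $\td(\b)$ passing through $i$, or neither. Controlling $W_2$ is the crux: one must show each such cycle contributes exactly one arrow, which relies on the fact that a summand of $W$ carries a single new arrow together with the acyclic vertex numbering $s(\a)<t(\a)$.

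Finally I would apply $\td$ to the cyclic identity $\td(W)=0$ and separate the terms that pass through $i$ from those that do not. This yields an equation of cyclic words that I read as a sum of paths based at $i$. Since every $\partial_\c W$ and $\partial_\b W$ occurring lies in the smaller ideal $\t{I}_{\ne i}$, so does $\sum_{\c:s(\c)=i}\c\,\td(\partial_\c W)$; Lemma \ref{lem:simp-gamma} then strips off the leading new arrow $\c$ and gives $\td(\partial_\c W)\in\t{I}$ for each $\c$ with $s(\c)=i$, in particular $\td(\partial_\a W)\in\t{I}$. With $\td(\t{I})\subseteq\t{I}$ established, $\td$ is a derivation of $B$ with $\f(\td)=\d$, so $\f$ is surjective and the left-exact sequence of Corollary \ref{cor:exact-left} extends to the asserted short exact sequence.
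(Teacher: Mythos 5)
Your proposal is correct and follows essentially the same route as the paper: left exactness from Corollary \ref{cor:exact-left}, then surjectivity of $\f$ by defining $\td$ on the new arrows so that $\td(W)=0$ up to cyclic permutation (Lemma \ref{lem:delta-W}), decomposing $W=W_1+W_2+W_3$ relative to $i=s(\a)$, and invoking Lemma \ref{lem:simp-gamma} to conclude $\td(\partial_\a W)\in\t{I}$. No substantive differences from the paper's argument.
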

\qed	

\medskip

We have an immediate consequence of this theorem. We recall first that if $B$ is the relation extension of $C$, then $C$ is not uniquely determined by $B$ (see, for instance, \cite{ABS08a}).

\begin{cor}
	Let $C, C'$ be tilted algebras and $E=\Ext{C}{2}{DC}{C}, E'=\Ext{C'}{2}{DC'}{C'}$. If $B=\C\ltimes E \simeq \C'\ltimes E'$, then $\HH{1}{C} \simeq \HH{1}{C'}$ and $\Ho{1}{B}{E} \simeq \Ho{1}{B}{E'}$.
\end{cor}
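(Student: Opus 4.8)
The plan is to read the two desired isomorphisms off the splitting of the short exact sequence of Theorem \ref{thm:main}. Since every term of that sequence is a $\K$-vector space, the sequence splits, so applying it to the two presentations $B = C\ltimes E$ and $B = C'\ltimes E'$ yields isomorphisms of $\K$-vector spaces
\[
\HH{1}{B} \simeq \Ho{1}{B}{E} \oplus \HH{1}{C} \simeq \Ho{1}{B}{E'} \oplus \HH{1}{C'}.
\]
Because $\HH{1}{B}$ is intrinsic to $B$ (and finite-dimensional, $B$ being finite-dimensional), it therefore suffices to establish \emph{one} of the two asserted isomorphisms; the second then follows from a dimension count. Concretely, once $\HH{1}{C}\simeq \HH{1}{C'}$ is known, the equalities $\dim{\Ho{1}{B}{E}} = \dim{\HH{1}{B}} - \dim{\HH{1}{C}} = \dim{\HH{1}{B}} - \dim{\HH{1}{C'}} = \dim{\Ho{1}{B}{E'}}$ give $\Ho{1}{B}{E}\simeq \Ho{1}{B}{E'}$.

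First I would prove $\HH{1}{C}\simeq \HH{1}{C'}$. The key point is that $C$ and $C'$, being two tilted algebras sharing the relation-extension $B$, are derived equivalent: both arise from cluster-tilting objects in the same cluster category $\cal{C}_H$ out of which $B$ is obtained as in \ref{subsec:cluster-tilted}, so each is derived equivalent to the corresponding hereditary algebra $H$, and hence to the other. Since the Hochschild cohomology groups are invariants of the derived category \cite{Hap89, K04}, a derived equivalence $\cal{D}^b(C)\simeq \cal{D}^b(C')$ induces isomorphisms $\HH{n}{C}\simeq \HH{n}{C'}$ for all $n$; in particular $\HH{1}{C}\simeq \HH{1}{C'}$, which is exactly what the dimension count above requires.

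The main obstacle is precisely this derived equivalence of $C$ and $C'$: the algebra $B$ does not determine $C$ (see the remark preceding the statement and \cite{ABS08a}), so one must argue that \emph{all} tilted algebras with relation-extension $B$ nonetheless lie in a single derived-equivalence class. I expect to handle this by realizing each such $C$ as $\End{H}{U}$ for a tilting module $U$ over a hereditary algebra $H$ in the derived-equivalence class determined by $B$, so that $C$ is derived equivalent to $H$ and $H$ is derived equivalent to $C'$; the required structural input is that this hereditary algebra $H$, up to derived equivalence, is an invariant of $B$ (\cite{ABS08, ABS08a}). With this established, the two displayed splittings together with the dimension count complete the proof, yielding both $\HH{1}{C}\simeq \HH{1}{C'}$ and $\Ho{1}{B}{E}\simeq \Ho{1}{B}{E'}$.
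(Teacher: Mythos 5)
Your argument is correct and follows essentially the same route as the paper: the paper likewise invokes the fact (from \cite{ABS08a}) that $C$ and $C'$ are tilted algebras of the same type, hence derived equivalent, applies Happel's derived invariance of Hochschild cohomology to get $\HH{1}{C}\simeq \HH{1}{C'}$, and then deduces $\Ho{1}{B}{E}\simeq \Ho{1}{B}{E'}$ from Theorem \ref{thm:main} by comparing dimensions.
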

\begin{proof}
	Under the stated hypothesis, $C$ and $C'$ are tilted algebras of the same type (see \cite{ABS08a}). Because of \cite[4.2]{Hap89} we have $\HH{1}{C} \simeq \HH{1}{C'}$. The theorem then implies immediately that we also have  $\Ho{1}{B}{E} \simeq \Ho{1}{B}{E'}$.
\end{proof}

\section{Interpretation of the Kernel}\label{section:kernel}
In this section we proceed to  relate our result to those of  \cite{AR09, ARS12}. There, under some hypotheses on $B$ or $C$, the kernel of the map \mor{\f}{\HH{1}{B}}{\HH{1}{C}} was computed by means of an equivalence relation on the set of new arrows which we now describe.

Given a strongly minimal  relation $\sum_{i=1}^m a_i w_i$ in $\t{I}$, either it is a relation of $I$, or there exist exactly $m$ new arrows $\c_1,\ldots,\c_m$ and old paths $u_i, v_i$ such that $w_i = u_i \c_i v_i$ (see \cite[3.1]{AR09} or \cite[3.1]{ARS12}). We let  $\approx$ be the smallest equivalence relation on  the set of new arrows  such that $\c \approx \c'$ whenever $\c$ and $\c'$ are two new arrows appearing in a strongly minimal relation. Finally, we let  $n=n_{B,C}$ be the number of equivalence classes for $\approx$. 

\begin{cor} Let $B= C\ltimes E$ be such that $B$ is tame or $C$ is constricted. Then we have $\Ho{1}{B}{E} \simeq \K^n$.
 \end{cor}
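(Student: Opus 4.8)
The plan is to compare two short exact sequences that share the very same surjection $\f$, and to conclude that they must therefore have isomorphic kernels. No additional homological machinery is needed beyond what has already been established; the statement is a formal consequence of Theorem \ref{thm:main} together with the cited computations.

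First I would recall that Theorem \ref{thm:main} furnishes the exact sequence $0 \to \Ho{1}{B}{E} \xrightarrow{\iota} \HH{1}{B} \xrightarrow{\f} \HH{1}{C} \to 0$. Exactness at the middle term gives $\im \iota = \ker \f$, and since $\iota$ is injective (as shown in Corollary \ref{cor:exact-left}), this yields an isomorphism of vector spaces $\Ho{1}{B}{E} \cong \ker \f$. It should be stressed that this identification uses no hypothesis whatsoever on $B$ or $C$.

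Next I would invoke the results of \cite{AR09, ARS12}. Under the standing assumption that $B$ is tame or $C$ is constricted, those references produce a short exact sequence of the form $0 \to \K^{n} \to \HH{1}{B} \xrightarrow{\varphi} \HH{1}{C} \to 0$, where $n = n_{B,C}$ counts the equivalence classes of new arrows under $\approx$. The crucial point — already observed in the remark following the definition of $\f$ — is that the map $\varphi$ appearing there is literally our map $\f$, namely the one sending the class of a derivation $\d$ of $B$ to the class of $p\d q$ on $C$. Consequently $\ker \f = \ker \varphi \cong \K^{n}$. Combining the two identifications gives $\Ho{1}{B}{E} \cong \ker \f \cong \K^{n}$, which is the assertion.

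The only genuinely delicate point, and hence the step I would verify with care, is the compatibility of the two maps: one must be certain that the surjection in the sequence of \cite{AR09, ARS12} coincides on the nose with the map $\f$ of the present paper, since otherwise the comparison of kernels would be illegitimate. Once this is granted — and the remark in Section \ref{section:left-exact} is designed precisely to grant it — the argument is purely formal and requires no calculation. The tameness or constrictedness hypothesis is used nowhere in our contribution; it enters solely through the external input that identifies $\ker \f$ with $\K^{n}$.
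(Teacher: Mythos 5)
Your argument is exactly the paper's proof, just spelled out in more detail: the paper also deduces the corollary immediately from Theorem \ref{thm:main} combined with the main result of \cite{ARS12}, with the identification of the two surjections $\f$ guaranteed by the remark in Section \ref{section:left-exact}. No issues.
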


\begin{proof} This follows immediately from Theorem \ref{thm:main} and the main result of \cite{ARS12}.
\end{proof}

We now prove that $\End{C^e}{E}$ is always a subspace of $\Ho{1}{B}{E}$. We recall that $\Ho{1}{B}{E}$ is $\ker{d^2} / \im{d^1}$ in the complex below
$$\SelectTips{eu}{10}\xymatrix{0\ar[r] & E^{B_0} \ar[r]^-{d^1}& \Hom{B_0^e}{\r_B}{E}\ar[r]^-{d^2} &\Hom{B_0^e}{\r_B \otimes \r_B}{E}\ar[r]  &\cdots }$$

\begin{lem}\label{lem:End-sub-HH}
 We have inclusions of vector spaces
$$\End{C^e}{E} \subseteq \Ho{1}{B}{E} \subseteq \End{C^e}{E} \oplus \frac{\Hom{C_0^e}{\r_C}{E}}{\im{d^1}}$$ 
\end{lem}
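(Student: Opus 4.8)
The plan is to make the low-degree part of the complex of Proposition~\ref{prop:res-rad} for $B$ with coefficients in $E$ completely explicit, using the two structural facts already at hand, namely $B_0 = C_0$ and $\r_B = \r_C \oplus E$, together with the observation that $E$ is a square-zero ideal of $B$, so that $E \cdot E = 0$. Throughout I write $d^1_C, d^2_C$ for the differentials of the complex of Proposition~\ref{prop:res-rad} computing $\Ho{\bullet}{C}{E}$.

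First I would split the degree-one cochains. Restriction along $\r_B = \r_C \oplus E$ gives
$$\Hom{B_0^e}{\r_B}{E} \;=\; \Hom{C_0^e}{\r_C}{E} \;\oplus\; \End{C_0^e}{E},$$
so every $B_0$-bilinear map $f \colon \r_B \to E$ is a pair $f = (f_1, f_2)$ with $f_1 = f|_{\r_C}$ and $f_2 = f|_E$. The central step is to unwind the cocycle condition $f \in \ker{d^2}$, that is the Leibniz identity $f(ab) = a\,f(b) + f(a)\,b$ for $a, b \in \r_B$. Writing $a = (c_1, e_1)$, $b = (c_2, e_2)$ and expanding by bilinearity into the $\r_C$- and $E$-components produces four cases. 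The $(\r_C, \r_C)$ case says exactly that $f_1$ is a derivation, i.e.\ $f_1 \in \ker{d^2_C}$. The mixed cases $(\r_C, E)$ and $(E, \r_C)$ reduce, after reading off the products $ce$ and $ec$ from the trivial-extension multiplication, to $f_2(ce) = c\,f_2(e)$ and $f_2(ec) = f_2(e)\,c$; combined with the $C_0$-bilinearity of $f_2$, these promote $f_2$ to a $C$-$C$-bimodule endomorphism, so that $f_2 \in \End{C^e}{E}$. The $(E, E)$ case is vacuous because $E \cdot E = 0$. Hence $\ker{d^2} = \ker{d^2_C} \oplus \End{C^e}{E}$.

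Next I would treat the coboundaries. For $x \in E^{B_0} = E^{C_0}$ the inner derivation $d^1 x$ sends $r = (c, e)$ to $xr - rx$; since $x \in E$ and $E \cdot E = 0$, the component $e$ contributes nothing and one is left with $xc - cx$. Thus $d^1 x$ has trivial $\End{C_0^e}{E}$-component, while its $\Hom{C_0^e}{\r_C}{E}$-component is precisely $d^1_C x$; in other words $\im{d^1}$ lies in the first summand and equals $\im{d^1_C}$. Passing to the quotient therefore only affects that summand, and I obtain the clean decomposition
$$\Ho{1}{B}{E} \;\cong\; \bigl(\ker{d^2_C}/\im{d^1_C}\bigr) \oplus \End{C^e}{E} \;=\; \Ho{1}{C}{E} \oplus \End{C^e}{E}.$$
Both inclusions claimed in the lemma then follow at once: the left one by sending $f_2 \in \End{C^e}{E}$ to the class of $(0, f_2)$, which is a cocycle and is a coboundary only if $f_2 = 0$, since coboundaries have trivial $\End$-component; the right one because $\ker{d^2_C} \subseteq \Hom{C_0^e}{\r_C}{E}$. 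The one delicate point, which I would treat as the main obstacle, is the bookkeeping in the Leibniz identity: one must track exactly which products survive once $E \cdot E = 0$ is imposed, and verify that the two mixed cases genuinely upgrade the $C_0$-bilinear $f_2$ to a full $C$-$C$-bimodule map rather than to something weaker.
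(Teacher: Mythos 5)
Your proof is correct and follows essentially the same route as the paper: the same splitting $\Hom{B_0^e}{\r_B}{E} = \Hom{C_0^e}{\r_C}{E} \oplus \End{C_0^e}{E}$, the same use of $E\cdot E=0$ to show that coboundaries land in the first summand and that the $E$-component of a cocycle upgrades to a $C$-$C$-bimodule endomorphism. Your bookkeeping of the four cases of the Leibniz identity is a complete if-and-only-if analysis, so you in fact obtain the sharper conclusion $\Ho{1}{B}{E} \cong \Ho{1}{C}{E}\oplus \End{C^e}{E}$, of which the two stated inclusions are immediate consequences.
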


\begin{proof}
Because $B=C\ltimes E$, we have $\r_B= \r_C \oplus E$. Also, $B_0 = C_0$ so that
$$\Hom{B_0^e}{\r_B}{E} = \Hom{C_0^e}{\r_C}{E} \oplus \End{C_0^e}{E}.$$
Thus, any $f\in \Hom{B_0^e}{\r_B}{E}$ can be written uniquely as $f= f_1 + f_2$ with $f_1 \in \Hom{C_0^e}{\r_C}{E}$ and $f_2 \in  \End{C_0^e}{E}$. Let $b=c+e \in \r_B$, where $c\in \r_C$ and $e\in E$.  Then 
$$f(b) = f_1(c) + f_2(e).$$
The statement of the lemma will follow easily from the next three claims.

\begin{enumerate}
 \item We first claim that $\im{d^1} \subseteq \Hom{C_0^e}{\r_C}{E}$. Indeed, let $f\in \im{d^1}$. There exists $e_0\in E^{B_0}$ such that $f=[e_0,-]$. But then, for every $b = c+e \in \r_B$ (with $c\in \r_C$ and $e\in E$) we have $f(b)= [e_0, c] + [e_0,e]$. Now $[e_0,e]= e_0e-ee_0 = 0$ because $E^2 =0$. Therefore $f(b) = [e_0,c] = f_1(c)$, that is, $f=f_1$.

\item Next, we see that $ \End{C^e}{E}\subseteq \ker{d^2}$. Let indeed $g\in\End{C^e}{E }{×}$. In particular $g\in \End{C_0^e}{E}$ so $g$ induces $f\in \Hom{B_0^e}{\r_B}{E}$ by $f(c+e) = g(e)$ (for $c\in \r_C$ and $e\in E$). We want to prove that $f$ is a derivation. Let $b,b'\in \r_B$ be such that $b=c+e,\ b'=c'+e'$ (with $c,c'\in \r_C$ and $e,e'\in E$). Then

$$f(bb') = f\left((c+e)(c'+e')\right) = g(ec'+ce') = g(e)c' + cg(e') = f(b)c' +c f(b') = f(b)b'+bf(b')$$ because $f(b)e' = ef(b')=0$.

\item Finally, we prove that $\ker{d^2} \subseteq \Hom{C_0^e}{\r_C}{E} \oplus \End{C^e}{E}$. Write $f\in \ker{d^2}$ as before in the form $f=f_1+f_2$ with $f_1 \in \Hom{C_0^e}{\r_C}{E}$ and $f_2 \in  \End{C_0^e}{E}$. We claim that in fact $f_2$ is a morphism of $C\textrm{-}C$-bimodules. Let $c\in C$ and $e\in E$, then $ce\in E$ so that $f_2(ce)= f(ce)$. Now, $f$ is a derivation, hence
$$f_2(ce) = f(ce) = f(c)e + cf(e) = f_1(c)e+cf_2(e) = cf_2(e) $$ because $f_1(c)e\in E^2=0$. Similarly, $f_2(ec) = f_2(e)c$. This proves that $\ker{d^2} \subseteq \Hom{C_0^e}{\r_C}{E}  + \End{C^e}{E}$. Because $\End{C^e}{E}\subseteq \End{C_0^e}{E}$, the sum is direct.
\end{enumerate}

\end{proof}

We now see that, in general, $\Ho{1}{B}{E}$ depends on the direct decompositions of $E$ as $C\textrm{-}C$-bimodule. We recall first that, as 
$C\textrm{-}C$-bimodule, $E$ is generated by the new arrows. If two new arrows occur in a strongly minimal relation, this means that they are somehow yoked together in $E$. Direct decompositions of $E$ as $C\textrm{-}C$-bimodule are studied in  \cite{ABDLS12} from which we import the following result. We include a proof for the benefit of the reader.
\begin{lem} As $C\textrm{-}C$-bimodule, $E$ decomposes as the direct sum of $n$ nonzero summands.
 \end{lem}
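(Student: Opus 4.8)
The plan is to read the decomposition off directly from the equivalence relation $\approx$. Write $\cal{K}_1,\dots,\cal{K}_n$ for the $n$ equivalence classes of new arrows. Since $E^2=0$, the bimodule $E$ is precisely the span in $B$ of the paths of $\t{Q}$ containing a single new arrow, so $E$ is generated as a $C\textrm{-}C$-bimodule by the new arrows. For each $j$ let $E_j$ be the sub-bimodule of $E$ generated by the new arrows lying in $\cal{K}_j$. As multiplying a path by an element of $C$ on either side does not alter its unique new arrow, hence preserves its class, each $E_j$ is a sub-bimodule and $E=\sum_{j=1}^n E_j$. Each $E_j$ is nonzero, since it contains the new arrows of $\cal{K}_j$, which are nonzero in $B$ because $\t{I}$ is admissible. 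It thus remains to prove that this sum is direct.

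To do so I would pass to the path algebra and grade $\K\t{Q}$ by the number of new arrows. By \cite[2.2]{ARS12} the ideal $\t{I}$ is generated as a two-sided ideal by a system $\R$ of strongly minimal relations, and by the structure of such relations recalled at the beginning of this section each $\rho\in\R$ is homogeneous for this grading: it is either an old relation (in $I$, degree $0$) or has all its terms of the form $u_i\c_i v_i$ with a single new arrow (degree $1$). Hence $\t{I}$ is a graded ideal, and since $E^2=0$ the algebra $B$ is concentrated in degrees $0$ and $1$, with $B_0=C$ and $B_1=E$. Writing $V$ for the space of degree-one paths and $N=\t{I}\cap V$, we get $E\simeq V/N$. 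Every path in $V$ has a well-defined new arrow, so $V=\bigoplus_{j=1}^n V_j$ with $V_j$ spanned by the paths whose new arrow lies in $\cal{K}_j$, and $E_j$ is the image of $V_j$. The directness of $E=\sum_j E_j$ is therefore equivalent to
$$N=\bigoplus_{j=1}^n\bigl(N\cap V_j\bigr).$$

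The heart of the argument is this equality. As $\t{I}$ is graded and generated by $\R$, its degree-one part $N$ is spanned by the degree-one elements $a\rho b$ with $a,b$ paths of $\t{Q}$ and $\rho\in\R$. I would then inspect these: if $\rho$ is an old relation, exactly one of $a,b$ carries the new arrow, and $a\rho b$ lies in the $V_j$ of that arrow; if $\rho$ has degree one, then $a,b$ are old paths and $a\rho b$ lies in a single $V_j$, the one determined by $\rho$ --- which is well defined precisely because \emph{all} new arrows occurring in $\rho$ are $\approx$-equivalent, by the very definition of $\approx$. In either case $a\rho b$ is class-homogeneous, so $N$ is spanned by class-homogeneous elements and the displayed equality follows. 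Consequently $E=\bigoplus_{j=1}^n V_j/(N\cap V_j)=\bigoplus_{j=1}^n E_j$ is a direct sum of $n$ nonzero bimodules. The genuinely delicate point is that the decomposition must be extracted from the \emph{strongly minimal} relations and not from the partial derivatives $\partial_\a W$ themselves: a single $\partial_\a W$ may involve new arrows from several classes, whereas strong minimality guarantees that no single relation links arrows of distinct classes.
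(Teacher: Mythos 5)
Your argument is correct and lands on the same decomposition as the paper --- one summand $E_j$ per $\approx$-class, generated by the new arrows of that class --- using the same two structural inputs: that $\t{I}$ admits a system $\R$ of strongly minimal relations, and that each such relation either lies in $I$ or has exactly one new arrow in every term, with all of those arrows $\approx$-equivalent by the very definition of $\approx$. Where you genuinely diverge is in how directness is proved. The paper works inside $E$: it takes an element of $\ker{\eta}$ involving a minimal number of classes, extracts a strongly minimal sub-relation supported on the same paths (which is then confined to a single class), and subtracts a multiple of it to contradict minimality. You instead lift to $\K\t{Q}$, grade by the number of new arrows, note that $\t{I}$ is a graded ideal whose degree-one part is spanned by the elements $a\rho b$ with $\rho\in\R$, and check that each such spanning element is class-homogeneous; the equality $N=\bigoplus_j(N\cap V_j)$ then falls out and directness is immediate. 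What your route buys is cleaner bookkeeping: it avoids the minimal-counterexample step (where one must verify that the subtraction actually decreases the chosen invariant) and makes explicit that strong minimality is used precisely to guarantee class-homogeneity of the degree-one generators of $\t{I}$. What it costs is the need to justify that the presentation $B\simeq\K\t{Q}/\t{I}$ identifies $E$ with the degree-one component and kills everything of degree at least two --- a point you correctly dispatch via $E^2=0$ and the fact that old paths land in $C$, but which the paper's intrinsic argument in $E$ never has to touch.
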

\begin{proof}
 Let $\cal{S}_1,\ldots \cal{S}_n$ be the equivalence classes of new arrows, with  $\cal{S}_j = \{\c_{j1},\ldots, \c_{js_j}\}$. Further, let $E_j$ be the $C\textrm{-}C$-bimodule generated by the arrows of $\cal{S}_j$, so its elements are of the form $x_j = \sum_{k=1}^{s_j} b_{jk} u_{jk} \c_{jk} v_{jk}$ where, as before, the $b_{jk}$ are scalars while $u_{jk}$ and $v_{jk}$ are (classes of) paths. We thus have a natural epimorphism \mor{\eta}{\bigoplus_{j=1}^n E_j}{E} given by $\eta(x_1,\ldots, x_n) = \sum_{j=1}^n x_j$. We show that it is also a monomorphism. Let $(x_1,\ldots x_n)$ be a non-zero element of the kernel of $\eta$ with the additional property that the number of non-zero elements  among the $x_j$ is minimal. Thus, we have a relation
$$\sum_{j=1}^n \sum_{k=1}^{s_j}  b_{jk} u_{jk} \c_{jk} v_{jk} = 0.$$
If $n=1$ there is nothing to show, so assume there are at least two classes occurring in the relation (that is two values of $j$). By definition, this relation is not strongly minimal, so there must be a strongly minimal relation involving the same paths
$$\sum_{j=1}^n \sum_{k=1}^{s_j}  b'_{jk} u_{jk} \c_{jk} v_{jk} = 0.$$
By definition of the equivalence relation $\approx$ only one class appears in the second relation. By subtracting a multiple of this second relation from the first one, we can reduce the number of non-zero terms in the original one, still getting an element of $\ker{\eta}$, a contradiction.
\end{proof}

Assume $E=E_1 \oplus \cdots \oplus E_n$, with the $E_i$ nonzero. Then the identity on each $E_i$ induces clearly an endomorphism of $E$ as $C\textrm{-}C$-bimodule. Since these endomorphisms are linearly independent, we get the following corollary.

\begin{cor} We have $\ts{dim}_\K \Ho{1}{B}{E} \geqslant n$, and if equality holds,then  $$\Hom{C^e}{E_i}{E_j} = \left\{ \begin{array}{cl} \K & \mbox{if } i=j, \ \\ 0 & \mbox{otherwise.} \end{array} \right.$$
 
\end{cor}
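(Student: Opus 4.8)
The plan is to combine Lemma \ref{lem:End-sub-HH}, which provides the inclusion $\End{C^e}{E} \subseteq \Ho{1}{B}{E}$, with the decomposition $E = E_1 \oplus \cdots \oplus E_n$ into $n$ nonzero $C\textrm{-}C$-bimodule summands established above. First I would note that the $n$ endomorphisms of $E$ induced by the identity maps on the summands $E_i$ (extended by zero on the remaining summands) are linearly independent in $\End{C^e}{E}$; linear independence is immediate from the directness of the sum, since these maps have pairwise disjoint images. As all of them lie in $\Ho{1}{B}{E}$ by Lemma \ref{lem:End-sub-HH}, we obtain $\ts{dim}_\K \Ho{1}{B}{E} \geqslant \ts{dim}_\K \End{C^e}{E} \geqslant n$, which is the required inequality.

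For the equality case, suppose $\ts{dim}_\K \Ho{1}{B}{E} = n$. Since $\End{C^e}{E}$ is a subspace of $\Ho{1}{B}{E}$, the chain $n \leqslant \ts{dim}_\K \End{C^e}{E} \leqslant \ts{dim}_\K \Ho{1}{B}{E} = n$ forces $\ts{dim}_\K \End{C^e}{E} = n$ (and, incidentally, $\End{C^e}{E} = \Ho{1}{B}{E}$). I would then expand the endomorphism space of the direct sum as
$$\End{C^e}{E} = \Hom{C^e}{\bigoplus_i E_i}{\bigoplus_j E_j} = \bigoplus_{i,j} \Hom{C^e}{E_i}{E_j},$$
so that $n = \sum_{i,j} \ts{dim}_\K \Hom{C^e}{E_i}{E_j}$.

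The conclusion then follows from a dimension count. Each diagonal summand $\Hom{C^e}{E_i}{E_i}$ contains the identity of $E_i$, which is nonzero because $E_i \neq 0$, so its dimension is at least one; hence $\sum_{i,j} \ts{dim}_\K \Hom{C^e}{E_i}{E_j} \geqslant \sum_i \ts{dim}_\K \Hom{C^e}{E_i}{E_i} \geqslant n$. Equality with $n$ therefore forces every off-diagonal term $\Hom{C^e}{E_i}{E_j}$ with $i \neq j$ to vanish and every diagonal term to be one-dimensional; a one-dimensional Hom space containing $\ts{id}_{E_i}$ equals $\K\,\ts{id}_{E_i} \simeq \K$, giving exactly the stated values. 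I do not expect a genuine obstacle here: all the homological content is already packaged in Lemma \ref{lem:End-sub-HH} and in the decomposition of $E$, and what remains is bookkeeping with dimensions; the only point requiring mild care is the linear independence of the summand identities, which is purely formal.
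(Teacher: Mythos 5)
Your proof is correct and follows essentially the same route as the paper: both rest on the inclusion $\End{C^e}{E}\subseteq \Ho{1}{B}{E}$ from Lemma \ref{lem:End-sub-HH}, the linear independence of the summand identities coming from the decomposition $E=\bigoplus_{j=1}^n E_j$, and the expansion $\End{C^e}{E}=\bigoplus_{i,j}\Hom{C^e}{E_i}{E_j}$ in the equality case. You merely spell out the final dimension count that the paper leaves as ``the conclusion follows.''
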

\begin{proof}
 Write $E=\bigoplus_{j=1}^nE_j$ so that, as vector spaces, we have $\End{C^e}{E} = \bigoplus_{i,j=1}^n \Hom{C^e}{E_i}{E_j}$. The identity maps on each $E_j$ provide $n$ linearly independent elements in $\End{C^e}{E}$ which, by Lemma \ref{lem:End-sub-HH} , is contained in $\Ho{1}{B}{E}$. This proves the first statement. If in addition equality holds, we must have $n=\ts{dim}_\K \End{C^e}{E}$, and the conclusion follows.
\end{proof}

We know that equality occurs under the hypothesis of \cite{ARS12}, so we get the following corollary.

\begin{cor}
 Assume $B = C\ltimes E$ is such that $B$ is tame or $C$ is constricted, then the indecomposable summands of $E$ as $C\textrm{-}C$-bimodule are pairwise orthogonal bricks.
\end{cor}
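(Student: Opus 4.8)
The plan is to combine the two preceding corollaries. The final statement asserts that when $B$ is tame or $C$ is constricted, the indecomposable summands of $E$ as a $C\textrm{-}C$-bimodule are pairwise orthogonal bricks. The key observation is that ``pairwise orthogonal bricks'' is exactly the conclusion reached when the dimension inequality $\ts{dim}_\K \Ho{1}{B}{E} \geqslant n$ becomes an equality, so the entire task reduces to establishing that equality under the stated hypotheses.

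First I would invoke the corollary stating that, under the hypothesis of \cite{ARS12} (i.e. $B$ tame or $C$ constricted), one has $\Ho{1}{B}{E} \simeq \K^n$, so in particular $\ts{dim}_\K \Ho{1}{B}{E} = n$. This is the equality case of the dimension corollary. Next I would feed this equality into the corollary asserting that, when $\ts{dim}_\K \Ho{1}{B}{E} = n$, the Hom-spaces between the canonical summands $E_i$ of $E$ satisfy $\Hom{C^e}{E_i}{E_j} = \K$ if $i=j$ and $0$ otherwise. I should be careful to note that the $n$ summands $E_1,\ldots,E_n$ produced in the decomposition lemma are precisely the indecomposable summands under the equality hypothesis: the condition $\Hom{C^e}{E_i}{E_i} = \K$ forces each $E_i$ to have trivial (hence local, one-dimensional) endomorphism algebra, which makes each $E_i$ indecomposable with $\End{C^e}{E_i} \simeq \K$, i.e. a brick; and the vanishing $\Hom{C^e}{E_i}{E_j} = 0$ for $i \neq j$ is exactly orthogonality.

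The cleanest way to phrase the proof is therefore: the equality $\ts{dim}_\K \Ho{1}{B}{E} = n$ holds by the main result of \cite{ARS12} together with the first dimension corollary, and then the second corollary gives $\Hom{C^e}{E_i}{E_j} = \delta_{ij}\,\K$, which says precisely that the $E_i$ are pairwise orthogonal bricks. I would then remark that, since the $E_i$ form a direct sum decomposition of $E$ into $n$ orthogonal summands each with endomorphism ring $\K$, this decomposition is forced to be the decomposition into indecomposables, so the indecomposable summands of $E$ are exactly the $E_i$ and they are pairwise orthogonal bricks.

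The main (and essentially only) obstacle is a bookkeeping one rather than a substantive difficulty: I must make sure the identification between the canonical decomposition $E = \bigoplus_{j=1}^n E_j$ from the decomposition lemma and the genuine indecomposable decomposition of $E$ is justified, i.e. that the condition $\End{C^e}{E_i} \simeq \K$ really does guarantee each $E_i$ is indecomposable (a module with endomorphism ring a field has no nontrivial idempotents, hence is indecomposable). Everything else is a direct citation chain through the earlier results, so no genuinely new argument is needed; the proof is short and consists of stringing the two corollaries together and reading off the definition of ``orthogonal bricks.''
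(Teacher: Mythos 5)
Your proposal is correct and follows exactly the route the paper intends: the hypothesis gives $\ts{dim}_\K \Ho{1}{B}{E} = n$ via the corollary citing \cite{ARS12}, and the preceding dimension corollary then yields $\Hom{C^e}{E_i}{E_j}=\K$ for $i=j$ and $0$ otherwise, which is the orthogonal-bricks statement. Your extra remark that $\End{C^e}{E_i}\simeq\K$ forces each $E_i$ to be indecomposable (no nontrivial idempotents), so that the canonical decomposition coincides with the indecomposable one, is a legitimate bookkeeping point the paper leaves implicit.
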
\qed

Actually, we state:

\begin{conj} Let $C$ be a tilted algebra, and $E=\Ext{C}{2}{DC}{C}$. Then the indecomposable summands of $E$ as $ C\textrm{-}C$-bimodule are pairwise orthogonal bricks.
\end{conj}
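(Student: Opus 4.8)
The plan is first to reduce the conjecture to the single equality $\ts{dim}_\K\End{C^e}{E}=n$. Recall from the lemma above that $E=E_1\oplus\cdots\oplus E_n$, where $E_j$ is the $C\textrm{-}C$-bimodule generated by the equivalence class $\cal{S}_j=\{\c_{j1},\ldots,\c_{js_j}\}$ of new arrows; hence, as vector spaces, $\End{C^e}{E}=\bigoplus_{i,j=1}^n\Hom{C^e}{E_i}{E_j}$. Each diagonal summand $\End{C^e}{E_j}$ contains the identity, so $\ts{dim}_\K\End{C^e}{E}\geqslant n$, with equality if and only if $\End{C^e}{E_j}=\K$ for every $j$ and $\Hom{C^e}{E_i}{E_j}=0$ for $i\neq j$. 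Since a brick is in particular indecomposable, this last condition says exactly that $E_1,\ldots,E_n$ are pairwise orthogonal bricks, and therefore are all the indecomposable summands of $E$. Thus the conjecture is equivalent to computing $\End{C^e}{E}$, and the whole task is to establish these two vanishing statements.

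For the brick property I would exploit the tight linkage of the new arrows inside a single class. Any $\f\in\End{C^e}{E_j}$ is determined by the images $\f(\c_{jk})$, and by $C_0$-bilinearity $\f(\c_{jk})$ lies in the same $e_xE_je_y$ as $\c_{jk}$; triangularity of $C$ then forces it to be a scalar multiple of $\c_{jk}$ together with possibly longer parallel paths running through other arrows of $\cal{S}_j$. By definition of $\approx$ the arrows $\c_{j1},\ldots,\c_{js_j}$ are joined by a chain of strongly minimal relations, and each such relation is a rigid linear dependence among paths $u_{ik}\c_iv_{ik}$; requiring $\f$ to respect it propagates one and the same scalar across the entire class. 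This should yield $\End{C^e}{E_j}=\K$, so that each $E_j$ is an indecomposable brick and $E=\bigoplus_jE_j$ is precisely its decomposition into indecomposables.

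For orthogonality I would show that a bimodule map $\f\colon E_i\to E_j$ with $i\neq j$ vanishes. Such a $\f$ carries a generator $\c\in\cal{S}_i$ into the span of paths of $E_j$ factoring through the new arrows of $\cal{S}_j$, and since $\f$ preserves the $C_0$-bimodule grading, only generators sharing source and target can contribute. Triangularity of $C$ and the cyclic shape of the potential $W$ restrict the admissible connecting old paths, and the residual cross terms should be eliminated by comparison with the strongly minimal relations that define $E_i$ and $E_j$, exactly as in the proof that $\eta$ is injective in the preceding lemma.

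The main obstacle is the orthogonality statement without any hypothesis on $B$ or $C$. Under the assumptions of \cite{ARS12} it is already known, but in general one cannot a priori exclude long-range homomorphisms between distinct summands arising from accidental parallel paths in a wild cluster-tilted algebra. I expect that a complete argument will require either a finer analysis of strongly minimal relations in the spirit of \cite{ABDLS12}, or a conceptual input from the cluster category: realising the summands $E_j$ as $\ts{Ext}$-bimodules attached to the indecomposable summands of the cluster-tilting object $T$, so that their orthogonality follows from the rigidity of $T$ rather than from a direct manipulation of paths and relations.
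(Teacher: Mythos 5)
You should first note that the statement you are proving is presented in the paper as a \emph{Conjecture}: the authors do not prove it, and establish it only under the additional hypotheses that $B$ is tame or $C$ is constricted, where it follows from the main result of \cite{ARS12} (which gives $\ts{dim}_\K\, \Ho{1}{B}{E}=n_{B,C}$) combined with the inclusion $\End{C^e}{E}\subseteq \Ho{1}{B}{E}$ and the lower bound $\ts{dim}_\K\,\End{C^e}{E}\geqslant n_{B,C}$. Your opening reduction --- that the conjecture is equivalent to the single equality $\ts{dim}_\K\,\End{C^e}{E}=n_{B,C}$, via the decomposition $E=\bigoplus_j E_j$ indexed by the $\approx$-classes --- is correct and is exactly the mechanism behind the paper's corollaries. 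But that equality \emph{is} the conjecture; everything after the reduction is where a proof would have to live, and neither of your two vanishing statements is actually established.

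Concretely: for $\End{C^e}{E_j}=\K$ you correctly observe that $\f(\c_{jk})$ may have components on longer parallel elements $u\,\c_{jl}\,v$ of $e_xE_je_y$, but the claimed ``propagation of one scalar through the chain of strongly minimal relations'' only constrains the coefficients of the generators $\c_{jk}$ themselves; nothing in the argument kills the higher-degree components, and you have not checked that the forced assignment is compatible with \emph{all} linear relations among the elements $u\c v$ inside $E_j$ (a bimodule morphism must preserve every such relation, not only the strongly minimal ones used to define $\approx$). For $\Hom{C^e}{E_i}{E_j}=0$ when $i\neq j$ you explicitly concede that accidental parallel paths cannot be excluded in general, and the proposed remedies (a finer analysis in the spirit of \cite{ABDLS12}, or rigidity of the cluster-tilting object in $\C_H$) are a research programme rather than an argument. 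So what you have is a correct reformulation together with a plausible plan of attack; the statement itself remains open, exactly as the paper records by labelling it a conjecture.
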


\section{An example} Let $\K$ be a field of characteristic different from $2$. Consider the tilted algebra $C$ given by the bound quiver of figure \ref{fig:ex-C}.
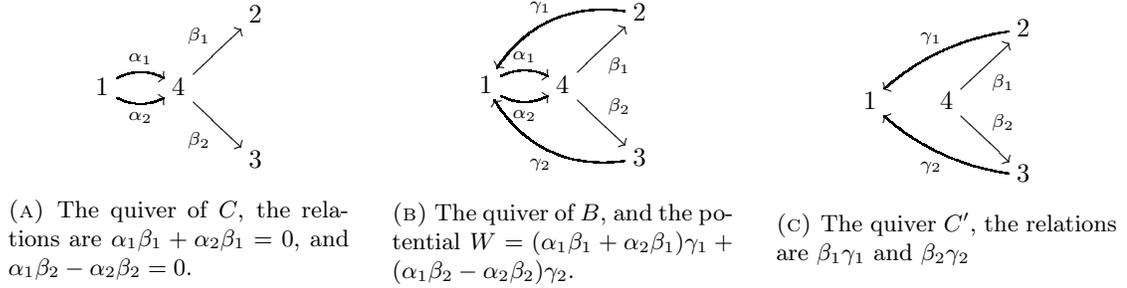
\begin{figure}[h!]\label{fig:example}
	\begin{subfigure}[h]{0.3\textwidth}
\begin{center}$\SelectTips{eu}{10}\xymatrix@R=15pt@C=18pt{ 
		 &				&2\\
1\ar@/^/[r]^{\a_1} \ar@/_/[r]_{\a_2}& 4 \ar[ur]^{\b_1} \ar[dr]_{\b_2}& \\
		&			& 3}$\end{center}
	\caption{The quiver of $C$, the relations are  $\a_1 \b_1 + \a_2 \b_1 =0$, and $\a_1\b_2 - \a_2\b_2 =0$.}
	\label{fig:ex-C}
	\end{subfigure}\hspace{0.03\textwidth}
	\begin{subfigure}[h]{0.3\textwidth}
\begin{center} $\SelectTips{eu}{10}\xymatrix@R=15pt@C=18pt{ 
		 &				&2\ar@/_1pc/[lld]_{\c_1} \\
1\ar@/^/[r]^{\a_1} \ar@/_/[r]_{\a_2}& 4 \ar[ur]_{\b_1} \ar[dr]^{\b_2}& \\
		&			& 3\ar@/^1pc/[llu]^{\c_2}}$\end{center}
	\caption{The quiver of $B$, and the  potential $W =(\a_1 \b_1 + \a_2 \b_1)\c_1 +(\a_1\b_2 - \a_2\b_2)\c_2$.  }	
	\label{fig:ex-B}
	\end{subfigure}\hspace{0.03\textwidth}
	\begin{subfigure}[h]{0.3\textwidth}
\begin{center}$\SelectTips{eu}{10}\xymatrix@R=15pt@C=18pt{ 
		 &				&2\ar@/_/[lld]_{\c_1}\\
1& 4 \ar[ur]_{\b_1} \ar[dr]^{\b_2}& \\
		&			& 3\ar@/^/[llu]^{\c_2}}$ \end{center}
	\caption{The quiver $C'$, the relations are $\b_1 \c_1$ and $\b_2 \c_2$}
	\label{fig:ex-c'}\end{subfigure}
\caption{The cluster-tilted algebra $B$ as relation extension of two tilted algebras $C$ and $C'$.}
\end{figure}
The relations for $C$ form a strongly minimal set of relations. The corresponding cluster-tilted algebra is $B$, given by the quiver of figure \ref{fig:ex-B} with potential  $W =(\a_1 \b_1 + \a_2 \b_1)\c_1 +(\a_1\b_2 - \a_2\b_2)\c_2$. In this case, the partial derivatives with respect to the old arrows are
\begin{eqnarray*}
	\partial_{\a_1} W = \b_1 \c_1 + \b_2 \c_2, & \mbox{\ \ \ \ } & \partial_{\a_2} W = \b_1 \c_1 - \b_2 \c_2,\\
	\partial_{\b_1} W = \c_1 \a_1 + \c_1\a_2, & \mbox{\ \ \ \ } & \partial_{\b_2} W = \c_2 \a_1 - \c_2 \a_2.
\end{eqnarray*}
However, these relations do not form a system of strongly minimal relations, since  $\partial_{\a_1} W + \partial_{\a_2} W = 2 \b_1 \c_1$, and this leads to a monomial relation $\b_1 \c_1$. In a similar way we obtain a monomial relation $\b_2\c_2$. This shows that $\c_1 \not\approx \c_2$, hence there are two equivalence classes. Here the extension bimodule $E$ decomposes as $E_1 \oplus E_2$, with $E_1 = C \c_1 C$ and $E_2 = C\c_2 C$. Assume $f\in \Hom{C^e}{E_1}{E_2}$. Since $f(\c_1) = f(e_2 \c_1 e_1) = e_2 f(\c_1) e_1$ and this lies in $E_2$, we have $f=0$. Using similar arguments one gets that  $E_1$ and $E_2$ are orthogonal bricks.

On the other hand, let $C'$ be the algebra given by the bound quiver of figure \ref{fig:ex-c'}. Then $B$ is the relation extension $C'\ltimes E'$. In this case the two new arrows are $\a_1$ and $\a_2$, so that $E' = E'_1 \oplus E'_2$ with $E'_1 = C' \a_1 C'$ and $E'_2 = C'\a_2 C'$. As vector spaces, we have
\begin{eqnarray*}
	E'_1 = \langle \a_1, \a_1 \b_2, \c_2 \a_1, \c_2 \a_1 \b_2 \rangle & \mbox{\ and\ } &E'_2 = \langle \a_2, \a_1 \b_1, \c_1 \a_2, \c_1 \a_2 \b_1 \rangle.
\end{eqnarray*}
Assume $f\in \Hom{C^e}{E'_1}{E'_2}$. Since $f(\a_1) = f(e_1 \c_1 e_4) = e_1 f(\a_1) e_4$ lies in $E'_2$, there exists a scalar $\l$ such that $f(\a_1) = \l\a_2$. Now, we have
$$0 = f(\a_1 \b_1 ) =  f(\a_1) \b_1 = \l \a_2 \b_2$$
and this forces $\l=0$ so that  $\Hom{C^e}{E'_1}{E'_2} = 0$. Using similar arguments one can get that $E'_1$ and $E'_2$ are orthogonal bricks.

\section*{Acknowledgements}
The first author gratefully acknowledges partial support from NSERC of Canada, FQRNT of Qu\'ebec and the Universit\'e de Sherbrooke. The third author is supported by a grant from the NSA. The fourth author gratefully acknowledges partial support from the NSF Grant  DMS-1001637 and  the University of Connecticut.

\bibliographystyle{acm}
\bibliography{../../ReferenciaMat/biblio}

\end{document}